\newtheorem{theorem}{Theorem}[section]
\newtheorem*{theorem-non}{Theorem}
\newtheorem{lemma}[theorem]{Lemma}
\newtheorem{proposition}[theorem]{Proposition}
\newtheorem{corollary}[theorem]{Corollary}
\newtheorem{remark}[theorem]{Remark}
\newtheorem{note}[theorem]{Note}
\newtheorem{Formula of adjoint functors}[theorem]{Formula of adjoint functors}
\newtheorem{example}[theorem]{Example}
\newtheorem{definition}[theorem]{Definition}
\newtheorem{notation}[theorem]{Notation}
\newtheorem{Adjunction formula}[theorem]{\indent\sc Adjunction formula}
\newtheorem{hypothesis}[theorem]{Hypothesis}
\DeclareMathOperator{\Id}{{Id}}
\DeclareMathOperator{\Hom}{{Hom}}
\DeclareMathOperator{\Ker}{{Ker}}
\DeclareMathOperator{\Ima}{{Im}}
\DeclareMathOperator{\ZZ}{{\mathbb Z}}
\DeclareMathOperator{\R}{{\mathcal R}}
\newcommand{\dosflechasa}[3][]{\xymatrix@1{\ar@<1ex>[r]^-{#2}
\ar@<-1ex>[r]_-{#3} & }}
\newcommand{\dosflechas}{{\xymatrix@1  {\ar@<1ex>[r]
\ar@<-1ex>[r] & }}}
\newcommand{\dosflechasab}[3][]{\xymatrix@1  @C10pt {\ar@<1ex>[r]^-{#2}
\ar@<-1ex>[r]_-{#3} & }}
\newcommand{\dosflechasb}{{\xymatrix@1 @C10pt {\ar@<1ex>[r]
\ar@<-1ex>[r] & }}}
\begin{document}

\title{Reflexivity of modules}

\author{Adri\'an Gordillo-Merino, Jos\'e Navarro and Pedro Sancho}
\address{Universidad de Extremadura. \newline All authors have been partially supported by Junta de Extremadura and FEDER funds.}

\subjclass[2010]{Primary 16D10; Secondary 18A99.}
\keywords{Modules, reflexivity theorem, functors}

\begin{abstract} Let $\,R\,$ be an associative  ring with unit.  
We consider $\,R-$modules as functors in the following way: if $\,M\,$ is a (left) $R$-module, let $\,\mathcal M\,$ be the  functor of $\,\mathcal R-$modules defined by $\,\mathcal M(S) := S­\otimes_R M\,$ for every $\,R-$algebra $\,S$. 
With the corresponding notion of dual functor, we prove that the natural 
morphism of functors $\,\mathcal M\to \mathcal M^{**}\,$ is an isomorphism. 
\end{abstract}

\maketitle

\section{Introduction}

Every undergraduate student knows the following, elementary Linear Algebra fact: if $k\,$ is a field, $\,V\,$ is a $k$-vector space and $\,V^*=\Hom_k(V,k)\,$ is its dual vector space, then the natural morphism $$V\to V^{**},\, v\mapsto \tilde v\,,\,\,\,\mathrm{where}\,\,\, \tilde v(w):= w(v), \,\forall \, w\in V^*,$$
is not an isomorphism in general---only if  $\,V\,$ is finite-dimensional. 

In addition, if $\,R\,$ is a commutative ring and $M^*=\Hom_R(M,R)\,$ denotes the dual of  an $\,R$-module $\,M\,$, the natural morphism $\,M\to M^{**}\,$ may not be an isomorphism, even if $\,M\,$ is finitely generated---just take $\,R=\ZZ\,$ and $\,M=\ZZ/2\ZZ\,$, so that $\,M^*=\Hom_{\mathbb{Z}}(\ZZ/2\ZZ,\ZZ)=0\,$, and hence $\,M^{**}=0$.

However, it should be noticed that, if we consider modules as functors on the category of commutative $\,R$-algebras, and the linear dual is the corresponding functor of homomorphisms, then these module functors {\it are} reflexive (\cite{Amel}, \cite{gabriel}). The aim of this paper is to extend this result to modules defined over non-commutative rings.

\bigskip

To be more precise, let $\,R\,$ be an associative ring with unit, $M$ an $R$-module, and $N$ a right $R$-module. Consider the following covariant functors defined on the category of $R$-algebras:

The functor of rings $\,\mathcal{R}\,$, defined, for any  $R$-algebra $\,S\,$, as
$$\,\mathcal R(S):=S\,,$$ 
and  the functors of $\,\mathcal R$-modules $\mathcal M$ and $\mathcal N^*\,$, defined by 
$$\mathcal M(S):=S\otimes_R M\,,$$
$$\mathcal N^*(S):=\Hom_S(N\otimes_RS,S)\,,$$
for any $R$-algebra $S$. 

\medskip

Having adopted this point of view, this is our main result (Th. \ref{reflex}):

\begin{theorem-non} Let $M$ be an $R$-module. The natural morphism of $\mathcal R$-modules $$\mathcal M \to \mathcal M^{**}$$
is an isomorphism.
\end{theorem-non}

When $\,R\,$ is a commutative ring,  this theorem has been proved for finitely generated modules 
using the language of sheaves in the big Zariski topology, in  \cite{Hirschowitz}, and it is
implicit in \cite[II,\textsection 1,2.5]{gabriel}. The reflexivity of these quasi-coherent $\,\mathcal{R}$-modules $\,\mathcal{M}\,$ has been used for a variety of applications in theory of linear representations of affine group schemes (\cite{Amel},\cite{Pedro1},\cite{Pedro2}).
Likewise, we think that this new reflexivity theorem will be useful in the theory of comodules over non-commutative rings.

\bigskip

Let us briefly sketch how we get to prove it. 

The first two sections include preliminary definitions, as well as certain technicalities that will be required later on. In particular, we observe that any $\,R-$module $\,M\,$ can be described as a kernel of a morphism (of groups) between algebras:
\begin{align*}
R\langle M\rangle &\xrightarrow{\quad q_1 - q_2 \quad} R\langle M\oplus Rx\rangle \\
p(m) & \quad \longmapsto \quad p(m\cdot x)- p(m)\cdot x
\end{align*} where $\,R\langle M\rangle\,$ stands for the $\,R-$algebra generated by $\,M\,$ (see \ref{N3.4}).


Using this idea, Section \ref{SectExt} is devoted to proving that every $\mathcal{R}-$module $\mathbb{F}\,$ naturally extends to a functor $\overline{\mathbb{F}}\,$ from the category of right $R-$modules to the category of abelian groups.

As examples, for any $\,R-$module $\,M\,$, the extension of $\,\mathcal{M}\,$ is
$$\aligned \overline{\mathcal M}(Q)=\Ker [Q\otimes_R M\otimes_{\ZZ} R&\, \overset{p_1-p_2}\longrightarrow \,Q\otimes_R M\otimes_{\ZZ} R\otimes_{\ZZ} R]\\  q\otimes m\otimes r & \,\longmapsto \,q\otimes m\otimes r\otimes 1-q\otimes m\otimes 1\otimes r\endaligned$$ 
and, for any right $\,R-$module $\,N\,$, the extension of $\,\mathcal{N}^*\,$ is
$$\overline{\mathcal N^*}(Q)=\Hom_R(N,Q),$$
which is the the functor of (co)points of the $\,R-$module $\,N$.

Under some assumptions, we are able to prove the existence of the following isomorphism (Theorem \ref{L5.111}):
$$\Hom_{\R}(\mathbb F,\mathbb F')=\Hom_{\R}(\overline {\mathbb F},\overline{\mathbb F'})\,.$$

Finally, in Section \ref{lafinal} we use the isomorphism just mentioned and Yoneda's lemma to prove that
$${\Hom}_{\mathcal R} ({\mathcal N^*}, {\mathcal M})=\overline{\mathcal M}(N) \, .$$
and, as a corollary, that $\,\mathcal M=\mathcal M^{**}$.


\medskip

An effort has been made to make this paper as self-contained as possible.

\section{Preliminaries}\label{preliminar}

Let $\,R\,$ be an associative ring with  unit, and let $\,\mathcal R\,$ be the covariant functor from the category of $\,R$-algebras, $\,R\text{-Alg}\,$, to the category of rings, defined by $\,{\mathcal R}(S):=S$, for any $R$-algebra $\,S$.


\medskip
\begin{definition}
A {\sl functor of $\,\mathcal{R}$-modules} is a covariant functor  $\,\mathbb M \colon R\text{-Alg} \to \text{Ab} \,$ together with a morphism of functors of sets $\,{\mathcal R}\times \mathbb M\to \mathbb M\,$ that endows
$\,\mathbb M(S)\,$ with an $\,S$-module structure, for any $\,R$-algebra $\,S$. 

A {\sl morphism of $\,{\mathcal R}$-modules} $\,f\colon \mathbb M\to \mathbb M'\,$
is a morphism of functors such that the morphisms $\,f_{S}\colon \mathbb M({S})\to
\mathbb M'({S})\,$ are morphisms of $\,{S}$-modules.
\end{definition}
\medskip

If $\,S\,$ is an $R$-algebra, the restriction of an $\,{\mathcal R}$-module $\,\mathbb M\,$ to the category of ${S}$-algebras will be written $$\mathbb M_{\mid {S}}(S'):=\mathbb M(S'),$$ for any commutative ${S}$-algebra $S'$.

\medskip
\begin{definition}
The {\sl functor of homomorphisms} $\,{\mathbb Hom}_{{\mathcal R}}(\mathbb M,\mathbb M')\,$ is the covariant functor $\, R\text{-Alg} \to \text{Ab}\,$ defined by $${\mathbb Hom}_{{\mathcal R}}(\mathbb M,\mathbb M')({S}):={\rm Hom}_{\mathcal {S}}(\mathbb M_{|{S}}, \mathbb M'_{|{S}}), $$ where $\,\Hom_{{\mathcal S}}(\mathbb M_{|{S}},\mathbb M'_{|{S}})$ stands for the set\footnote{In this paper, we will only consider well-defined functors $\,{\mathbb Hom}_{{\mathcal R}}(\mathbb M,\mathbb M')$, that is to say, functors such that $\,\Hom_{\mathcal {S}}(\mathbb M_{|{S}},\mathbb {M'}_{|{S}})\,$ is a set, for any $R$-algebra ${S}$.} of all morphisms of $\,{\mathcal S}$-modules from $\,\mathbb M_{|{S}}\,$ to $\,\mathbb M'_{|{S'}}$.

In particular, the {\sl dual} of an $\mathcal{R}$-module $\,\mathbb{M}\,$ is the functor $$\,\mathbb M^*:=\mathbb Hom_{{\mathcal R}}(\mathbb M,\mathcal R). $$
\end{definition}
\medskip

In the following, it will also be convenient to consider another notion of dual module:
\begin{definition}  If $\,\mathbb M\,$ is an $\,\mathcal R$-module, the {\sl extended dual} $\,\mathbb M^\vee\,$ is the following functor $\, R\text{-Alg} \to \text{Ab}\,$ 
$$\mathbb M^\vee (S):=
\Hom_{\mathcal R}(\mathbb M,\mathcal S) \ . $$\end{definition}



\medskip
\begin{definition} The {\sl quasicoherent $\mathcal{R}$-module} associated to an $R$-module $\,M\,$ is the following covariant functor 
$$\,\mathcal{M} \colon R{\text-Alg} \to \text{Ab} \, , \quad \mathcal{M} (S) := S \otimes_R M. $$
\end{definition}
\medskip


Quasi-coherent modules are determined by its global sections. In particular, we will make use of the following statement, whose proof is immediate:

\medskip
\begin{proposition} \label{tercer}
Restriction to global sections $\,f\mapsto f_R\,$ defines a bijection:
$${\rm Hom}_{\mathcal R} ({\mathcal M}, \mathbb M) = {\rm Hom}_R (M, \mathbb M(R))\,, $$ for any quasicoherent $\,\mathcal{R}$-module $\,\mathcal{M}\,$ and any $\,{\mathcal R}$-module $\,\mathbb M$.
\end{proposition}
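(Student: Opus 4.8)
The plan is to establish the bijection by using the universal property of the tensor product together with the Yoneda-style structure of quasi-coherent modules. The key observation is that an element of $\Hom_{\mathcal R}(\mathcal M, \mathbb M)$ is a morphism of $\mathcal R$-modules $f\colon \mathcal M \to \mathbb M$, which by definition consists of a compatible family of $S$-module homomorphisms $f_S\colon S\otimes_R M \to \mathbb M(S)$, one for each $R$-algebra $S$, commuting with all algebra morphisms $S\to S'$. The restriction map sends such an $f$ to $f_R\colon R\otimes_R M = M \to \mathbb M(R)$, which is an $R$-module homomorphism, hence lands in $\Hom_R(M,\mathbb M(R))$.

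**First I would** check that this restriction map is well-defined, which is essentially immediate since $f_R$ is one of the components of the natural transformation and $R$ is the initial object in $R\text{-Alg}$. **Next I would** construct the inverse map: given an $R$-linear map $g\colon M \to \mathbb M(R)$, I define, for each $R$-algebra $S$ with structural morphism $R\to S$, a map $f^g_S\colon S\otimes_R M \to \mathbb M(S)$ by the assignment $s\otimes m \mapsto s\cdot \bigl(\mathbb M(R\to S)\bigr)(g(m))$, where $\mathbb M(R\to S)\colon \mathbb M(R)\to \mathbb M(S)$ is the map $\mathbb M$ assigns to the structural morphism and $s\cdot(-)$ denotes the $S$-module action on $\mathbb M(S)$. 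I would verify that $f^g_S$ is $S$-bilinear in the sense required so that it factors through the tensor product (the balanced-product relations over $R$ follow from $R$-linearity of $g$ and functoriality), and hence defines an $S$-module homomorphism on $S\otimes_R M$.

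**Then I would** confirm that the family $\{f^g_S\}_S$ is natural in $S$, i.e.\ that it is genuinely a morphism of $\mathcal R$-modules: for any $R$-algebra morphism $\varphi\colon S\to S'$, the relevant square commutes because both $\mathcal M(\varphi)$ and $\mathbb M(\varphi)$ are induced by $\varphi$ and $\mathbb M(\varphi)$ intertwines the module actions. Finally I would check that the two constructions are mutually inverse: starting from $f$, restricting to get $f_R$, and rebuilding yields back $f$ precisely because $f_S(s\otimes m) = s\cdot f_S(1\otimes m) = s\cdot \mathbb M(R\to S)(f_R(m))$, which is forced by $S$-linearity and naturality of $f$; and starting from $g$, the rebuilt $f^g$ restricts to $g$ on $R$ since $\mathbb M(\mathrm{id}_R)$ is the identity.

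**The main obstacle**, though the statement calls the proof immediate, is really just the bookkeeping of showing that every morphism $f$ is completely determined by its value on $1\otimes M$—that is, that the naturality and $S$-linearity conditions force $f_S(s\otimes m)=s\cdot\mathbb M(R\to S)(f_R(m))$—so that the restriction map is injective, while surjectivity amounts to the construction of $f^g$ being well-posed. There is no deep difficulty here; the content is entirely the universal property of $S\otimes_R M$ as the base change of $M$ and the fact that $R$ is initial among $R$-algebras, which together make $\mathcal M$ behave like a representable (corepresented) functor whose maps out are pinned down by their global sections.
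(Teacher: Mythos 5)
Your proof is correct and is precisely the ``immediate'' argument the paper has in mind (the paper omits the proof entirely, calling it immediate): a morphism $f\colon\mathcal M\to\mathbb M$ is forced by $S$-linearity and naturality to satisfy $f_S(s\otimes m)=s\cdot\mathbb M(R\to S)(f_R(m))$, and conversely this formula builds a well-defined natural transformation from any $g\in\Hom_R(M,\mathbb M(R))$. Your verification of the $R$-balanced relations correctly uses the fact that the structure morphism $\mathcal R\times\mathbb M\to\mathbb M$ is a morphism of functors, i.e.\ $\mathbb M(R\to S)(r\cdot x)=\varphi(r)\cdot\mathbb M(R\to S)(x)$, which is exactly the point that makes the argument go through over a non-commutative $R$.
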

\medskip

As a consequence, both notions of dual module introduced above coincide on quasi-coherent modules; that is, $\,\mathcal M^*=\mathcal M^\vee\,$. 

In fact, if $\,S\,$ is an $R$-algebra, then 
$$\,\mathcal{M}^\vee (S) = \Hom_{\mathcal R}(\mathcal{M},\mathcal S) = \Hom_{R}(M , S)\,$$
and, as $\,{\mathcal M}_{\mid {S}}\,$ is the quasi-coherent $\mathcal {S}$-module associated to $\,S\otimes_R M \,$, 
$$\mathcal M^*(S)=\Hom_{\mathcal S}(\mathcal M_{|S},\mathcal S)=\Hom_S(S\otimes_R M,S)=\Hom_R(M,S) = \mathcal{M}^\vee (S) \ . $$




\bigskip
Finally, any definition or statement in the category of $\,\mathcal R$-modules has a corresponding definition or statement in the category of right $\,\mathcal R$-modules, that we will use without more explicit mention.

As examples, if $\,\mathbb{M}\,$ is an $\,\mathcal{R}$-module, then $\,\mathbb M^* = {\mathbb Hom}_{\mathcal R}(\mathbb M,\mathcal R)\,$ is a right $\,\mathcal R$-module. If $\,\mathbb N\,$ is a right $\,\mathcal R$-module, then the dual module defined by 
$$\mathbb N^*:=\mathbb Hom_{\mathcal R}(\mathbb N,\mathcal R)$$ is an $\,\mathcal R$-module, etc.

\section{Modules as kernels of morphisms between algebras}\label{hypo}

\begin{hypothesis} \label{HP2}
Let $N$ be a right  $R$-module and let $M$ be an $R$-module.  The sequence of morphisms of groups
$$N\otimes_RM\overset{i}\to N\otimes_R M\otimes_{\ZZ} R\dosflechasa{p_1}{p_2}   N\otimes_R M\otimes_{\ZZ} R\otimes_{\ZZ} R$$
where $\,i(n\otimes m):=n\otimes m\otimes 1$, $\,p_1(n\otimes m\otimes r):=n\otimes m\otimes r\otimes 1$ and
$\,p_2(n\otimes m\otimes r):=n\otimes m\otimes 1\otimes r$, is exact.
\end{hypothesis}

\medskip
The following three Propositions provide situations where this hypothesis is satisfied.

\begin{proposition} \label{super} Let $\,N\,$ be a right $\,R$-module and let $\,M\,$ be an $\,R$-module. If  $\,M\,$ (or $\,N\,$) is an $\,R$-bimodule or a flat module, then Hypothesis \ref{HP2} is satisfied.
\end{proposition}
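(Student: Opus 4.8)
The plan is to view the sequence in Hypothesis~\ref{HP2} as an augmented complex
\[
N\otimes_R M\xrightarrow{\,i\,}N\otimes_R M\otimes_\ZZ R\xrightarrow{\,p_1-p_2\,}N\otimes_R M\otimes_\ZZ R\otimes_\ZZ R
\]
and to prove exactness at the first two spots, that is, that $\,i\,$ is injective and that $\,\Ima i\,$ coincides with the equalizer $\,\{x:p_1(x)=p_2(x)\}$. I would dispose of the four situations in two groups---bimodule and flat---handling in each the hypothesis on $\,N\,$ by the left--right symmetry already invoked in the paper, so that it suffices to argue for $\,M$.

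\emph{The bimodule case.} Suppose first that $\,M\,$ is an $\,R$-bimodule, so that $\,P:=N\otimes_R M\,$ acquires a right $R$-module structure via $\,(n\otimes m)\cdot r:=n\otimes(mr)$. The idea is to produce an explicit contracting homotopy out of this extra action. I would set $\,\mu\colon P\otimes_\ZZ R\to P$, $\,\mu(p\otimes r):=p\cdot r$, and $\,\nu\colon P\otimes_\ZZ R\otimes_\ZZ R\to P\otimes_\ZZ R$, $\,\nu(p\otimes r\otimes r'):=(p\cdot r)\otimes r'$. Then $\,\mu\circ i=\mathrm{id}_P$, which already gives the injectivity of $\,i$; and for any $\,x\,$ with $\,p_1(x)=p_2(x)\,$ the two computations $\,\nu(p_1(x))=i(\mu(x))\,$ and $\,\nu(p_2(x))=x\,$ yield $\,x=i(\mu(x))\in\Ima i$, settling exactness at the middle term. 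When $\,N\,$ is the bimodule instead, the mirror-image homotopy attached to the left action $\,r\cdot(n\otimes m):=(rn)\otimes m\,$ works verbatim.

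\emph{The flat case.} The obstacle here is precisely that $\,N\otimes_R M\,$ carries no natural $R$-action, so the homotopy above is no longer available; this is the step I expect to be the crux. I would circumvent it by applying the bimodule case to the coefficients $\,R\,$ themselves: since $\,R\,$ is an $R$-bimodule, the sequence of right $R$-modules $\,N\to N\otimes_\ZZ R\rightrightarrows N\otimes_\ZZ R\otimes_\ZZ R\,$ (the instance $\,M=R$) is exact. If $\,M\,$ is flat, the functor $\,(-)\otimes_R M\,$ is exact and therefore keeps this sequence exact; the canonical associativity isomorphisms $\,(N\otimes_\ZZ R\otimes_\ZZ R)\otimes_R M\cong N\otimes_R M\otimes_\ZZ R\otimes_\ZZ R\,$ then identify the result with the sequence of Hypothesis~\ref{HP2}, which is thus exact. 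The case of $\,N\,$ flat is symmetric, tensoring the exact sequence $\,M\to M\otimes_\ZZ R\rightrightarrows M\otimes_\ZZ R\otimes_\ZZ R\,$ (the instance $\,N=R$) with $\,N\,$ on the left. The remaining points---well-definedness of $\,\mu\,$ and $\,\nu$, the $R$-linearity of $\,i,p_1,p_2$, and the naturality of the associativity isomorphisms---are routine verifications.
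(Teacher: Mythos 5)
Your proof is correct and takes essentially the same route as the paper: your maps $\mu$ and $\nu$ are exactly the paper's contracting homotopy $s(n\otimes m\otimes r)=n\otimes mr$, $s'(n\otimes m\otimes r\otimes r')=n\otimes mr\otimes r'$ in the bimodule case, and the flat case is handled identically by specializing to the bimodule $M=R$ and tensoring the resulting exact sequence of right $R$-modules (with $R$ acting through the $N$ factor) by the flat module. The only difference is expository: you make explicit the symmetric $N$-cases and the tensor-swap identifications that the paper leaves implicit.
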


\begin{proof} Suppose that $\,M\,$ is a bimodule.
It is clear that $\,\Ima i\subseteq \Ker(p_1-p_2)$. 
Let 
$\,s\colon N\otimes_R M\otimes_{\ZZ} R\to N\otimes_R M $, $s(n\otimes m\otimes r)=n\otimes mr$
and $\,s'\colon N\otimes_R M\otimes_{\ZZ} R \otimes_{\ZZ} R \to N\otimes_R M \otimes_{\ZZ} R$, $\,s'(n\otimes m\otimes r\otimes r')=n\otimes mr\otimes r'$. Observe that $\,s\circ i=\Id$, so that $i$ is injective. Also, $\,s'\circ p_2=\Id\,$ and $\,s'\circ p_1=i\circ s$. Thus, if $\,x\in \Ker(p_1-p_2)$, then $\,p_1(x)-p_2(x)=0$; hence, $\,0=s'(p_1(x))-s'(p_2(x))=i(s(x))-x\,$ and $\,x\in \Ima i$.

In particular, taking the bimodule $\,M=R\,$, the following sequence of morphisms of groups is exact:
$$N\overset{i}\to N\otimes_{\ZZ} R\dosflechasa{p_1}{p_2}   N\otimes_{\ZZ} R\otimes_{\ZZ} R \ . $$ 
Thus, if $\,M\,$ is flat, tensoring by $\,M\,$ it  also follows that Hypothesis \ref{HP2} is satisfied. 

\end{proof}

\begin{proposition} \label{super2} Hypothesis \ref{HP2} is satisfied if there exists a central subalgebra $\,R'\subseteq R\,$ such that $\,Q\to Q\otimes_{R'} R\,$ is injective, for any $\,R'$-module $\,Q\,$.
\end{proposition}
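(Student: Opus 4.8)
The plan is to reduce Hypothesis \ref{HP2} to the bimodule case already settled in Proposition \ref{super}, by comparing the complex attached to the $R'$-module $Q:=N\otimes_R M$ with the one attached to the right $R$-module $\widetilde Q:=Q\otimes_{R'}R$. Since $R'$ is central in $R$, the group $N\otimes_R M$ inherits an $R'$-module structure, and $\widetilde Q=Q\otimes_{R'}R$ becomes a right $R$-module via multiplication on its $R$-factor; the unit map $\iota\colon Q\to\widetilde Q$, $q\mapsto q\otimes 1$, is injective by the hypothesis. The inclusion $\Ima i\subseteq\Ker(p_1-p_2)$ is immediate exactly as in Proposition \ref{super}, and $i$ itself is injective because composing it with the natural projection $Q\otimes_\ZZ R\twoheadrightarrow Q\otimes_{R'}R=\widetilde Q$ recovers $\iota$. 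Thus everything comes down to proving the inclusion $\Ker(p_1-p_2)\subseteq\Ima i$.

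First I would record that the maps $i,p_1,p_2$ merely insert the unit $1\in R$, so they are natural in the abelian group occupying the first tensor slot; applying $\iota$ then produces a commutative ladder from the complex for $Q$ to the complex
$$\widetilde Q\overset{\widetilde i}\to \widetilde Q\otimes_\ZZ R\dosflechasa{\widetilde p_1}{\widetilde p_2} \widetilde Q\otimes_\ZZ R\otimes_\ZZ R,$$
whose bottom row is exact: it is precisely the sequence of Proposition \ref{super} for the bimodule $M=R$ applied to the right $R$-module $\widetilde Q$. The three vertical arrows are $\iota$, $\iota\otimes_\ZZ\Id_R$ and $\iota\otimes_\ZZ\Id_{R\otimes_\ZZ R}$.

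The crucial point, and the step I expect to be the main obstacle, is that the middle vertical map $\iota\otimes_\ZZ\Id_R$ is injective \emph{even though} $R$ need not be flat over $\ZZ$ and $\iota$ need not split. This is where the full force of the hypothesis --- injectivity of $P\to P\otimes_{R'}R$ for \emph{every} $R'$-module $P$, not just for $Q$ --- is needed. I would apply it to the $R'$-module $P:=Q\otimes_\ZZ R$ (with $R'$ acting through $Q$) and reorder the tensor factors, so that the hypothesis map $P\to P\otimes_{R'}R$ is identified with $\iota\otimes_\ZZ\Id_R$; the same trick handles $P:=Q\otimes_\ZZ R\otimes_\ZZ R$, though injectivity of that third arrow will not be needed below.

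Finally I would run the diagram chase. Given $z\in\Ker(p_1-p_2)$, commutativity of the right square together with exactness of the bottom row yields $w\in\widetilde Q$ with $\widetilde i(w)=(\iota\otimes\Id_R)(z)$, and it remains to see $w\in\Ima\iota$. For this I would pass to the cokernel $C:=\Coker\iota$, again an $R'$-module, and use right-exactness of $-\otimes_\ZZ R$ on $Q\overset{\iota}\to\widetilde Q\to C\to 0$ to rewrite the condition $\widetilde i(w)\in\Ima(\iota\otimes\Id_R)$ as the vanishing of the class of $w$ in $C\otimes_\ZZ R$; projecting further to $C\otimes_{R'}R$ and invoking the hypothesis for $C$ forces $w\in\Ima\iota$, say $w=\iota(q)$. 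Then $(\iota\otimes\Id_R)(i(q))=\widetilde i(\iota(q))=(\iota\otimes\Id_R)(z)$, and injectivity of the middle vertical map gives $z=i(q)\in\Ima i$, as required.
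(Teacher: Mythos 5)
Your proof is correct, and it is a genuinely different organization of the same underlying mechanism (reduce to Proposition \ref{super} by tensoring with $R$ over $R'$, then chase). The paper works \emph{before} tensoring with $N$: it sets $M':=M\otimes_{R'}R$ and $M'':=(M'/M)\otimes_{R'}R$, both $R$-bimodules, proves that $0\to M\to M'\to M''$ is \emph{universally} exact, and then runs a chase in a $3\times 3$ diagram whose rows are exact and whose second and third columns are exact by Proposition \ref{super}. You work \emph{after} tensoring with $N$: under the identification $N\otimes_R(M\otimes_{R'}R)\cong (N\otimes_R M)\otimes_{R'}R=\widetilde Q$, your two-row ladder is precisely the first two columns of the paper's diagram, but you dispense with $M''$ and with universal exactness altogether, replacing the third column by the cokernel $C=\Coker\iota$ together with one more direct application of the hypothesis (note that $N\otimes_R M''\cong C\otimes_{R'}R$, so your cokernel step is the paper's row-exactness in disguise). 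Likewise, your key step --- identifying $\iota\otimes_\ZZ\Id_R$ with the hypothesis map of the $R'$-module $Q\otimes_\ZZ R$ via the factor swap, which indeed works --- is the fact the paper extracts from universal injectivity of $M\to M'$ applied to the right $R$-module $R\otimes_\ZZ N$. What the paper's route buys is a reusable structural statement (every $R$-module embeds universally exactly into a two-term complex of bimodules); what your route buys is economy: Proposition \ref{super} is needed only in its simplest instance $M=R$, the hypothesis is invoked three times (on $Q$, on $Q\otimes_\ZZ R$, on $C$), and no auxiliary resolution or purity notion is required. All the steps you flag as delicate (injectivity of $i$ via the projection to $\widetilde Q$, injectivity of $\iota\otimes_\ZZ\Id_R$, the cokernel argument) check out.
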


\begin{proof}
Let us write $\,M':=M\otimes_{R'} R\,$, which is a bimodule as follows:
$r_1\cdot (m\otimes  r)\cdot r_2=r_1m\otimes rr_2$. The morphism of $R$-modules $i\colon M\to M'$, $i(m):= m\otimes 1$ is universally injective: Given an $R$-module $P$, put $Q:=P\otimes_R M$. Then, the morphism $P\otimes_R M=Q\to Q\otimes_{R'} R=P\otimes_R M'$ is injective.

Put  $Q:=M'/M$ and $M'':=Q\otimes_{R'} R$.  Let $p_1$ be the composite morphism
 $M'\to M'/M=Q\to Q\otimes_{R'}R =M''$.
The sequence of morphisms of $R$-modules
$$0\to M\overset i\to M'\overset p\to M''$$
is universally exact. Consider the following commutative diagram
$$\xymatrix @R10pt @C10pt {0\ar[r] & N\otimes_R M \ar[r]^-{Id\otimes i} \ar[d] & N\otimes_RM' \ar[r]^-{Id\otimes p}  \ar[d] & N\otimes_RM''\ar[d] \\ 0\ar[r] &  N\otimes_R M \otimes_{\ZZ} R\ar[r]^-{Id\otimes i\otimes Id}  \ar@<1ex>[d] \ar@<-1ex>[d]  & N\otimes_R M' \otimes_{\ZZ} R\ar[r]^-{Id\otimes p\otimes Id}  \ar@<1ex>[d] \ar@<-1ex>[d] & N\otimes_RM''\otimes_{\ZZ} R \ar@<1ex>[d] \ar@<-1ex>[d] \\ 0\ar[r] & 
 N\otimes_R M \otimes_{\ZZ} R \otimes_{\ZZ} R\ar[r]^-{i'} & N\otimes_RM'\otimes_{\ZZ} R\otimes_{\ZZ} R  \ar[r]^-{p'}  &N\otimes_R M''\otimes_{\ZZ} R\otimes_{\ZZ} R
}$$ (where $i'=\Id\otimes i\otimes Id\otimes Id$ and $p'=\Id\otimes p\otimes Id\otimes Id$) whose rows are exact, as well as both the second and third columns, by Proposition \ref{super}. Hence, the first column is exact too.

\end{proof}

\begin{notation} \label{N3.4}
If $\,M\,$ is an $\,R$-module, observe that $\,M\otimes_{\ZZ} R\,$ is an $R$-bimodule and we can 
consider the tensorial $\,R$-algebra 
$$R\langle M\rangle :=T^\cdot_{R} (M\otimes_{\ZZ} R)=(T^{\cdot}_{\ZZ} M)\otimes_{\ZZ} R \, . $$\end{notation}
\medskip

\begin{remark} If $N$ is a right $R$-module, then:
$$R\langle N\rangle:=T^\cdot_R(R\otimes_{\ZZ} N) \, . $$ 
\end{remark}
\medskip

\begin{lemma} The following functorial map is bijective:
$$\Hom_{R-alg}(R\langle M\rangle, S)\to \Hom_{R}(M,S) \ , \quad f\mapsto f'\  ,$$
where $\,f'(m):=f(m\otimes 1)\,$ for any $\, m\in M\,$.\end{lemma}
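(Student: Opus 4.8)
The statement is precisely the universal property of the tensorial $R$-algebra $R\langle M\rangle = T^\cdot_R(M\otimes_\ZZ R)$, read through an identification of its generating bimodule. My plan is to factor the claimed bijection as the composite of two elementary adjunctions. First I would recall the defining universal property of the tensor algebra of an $R$-bimodule $V$: a ring homomorphism $T^\cdot_R(V)\to S$ over $R$ (that is, compatible with the structure maps $R\to T^\cdot_R(V)$ and $R\to S$) is the same datum as an $R$-bimodule homomorphism $V\to S$, where $S$ carries the bimodule structure induced by the ring map $R\to S$. This gives
$$\Hom_{R-alg}(T^\cdot_R(V),S)=\Hom_{R-bimod}(V,S),$$
the bijection being restriction to $V\subset T^\cdot_R(V)$, with inverse the universal extension to tensors.

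Next I would identify $M\otimes_\ZZ R$, equipped with the bimodule structure $r'(m\otimes r)r'':=r'm\otimes rr''$, as the \emph{free} $R$-bimodule on the left $R$-module $M$. Concretely, the restriction
$$\Hom_{R-bimod}(M\otimes_\ZZ R,S)\to\Hom_R(M,S),\quad \phi\mapsto(m\mapsto\phi(m\otimes 1)),$$
is a bijection whose inverse sends a left $R$-linear map $g\colon M\to S$ to $\phi_g(m\otimes r):=g(m)\cdot r$, the scalar $r\in R$ acting through $R\to S$. Here I must check that $\phi_g$ is well defined --- the pairing $(m,r)\mapsto g(m)r$ being $\ZZ$-bilinear, it factors through $M\otimes_\ZZ R$ --- and that it is a bimodule morphism. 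Conversely, any bimodule map $\phi$ satisfies $\phi(m\otimes r)=\phi((m\otimes 1)r)=\phi(m\otimes 1)\cdot r$ and $\phi(rm\otimes 1)=r\cdot\phi(m\otimes 1)$, so it is recovered from its left $R$-linear restriction $m\mapsto\phi(m\otimes 1)$.

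Composing the two bijections with $V=M\otimes_\ZZ R$ yields
$$\Hom_{R-alg}(R\langle M\rangle,S)=\Hom_{R-bimod}(M\otimes_\ZZ R,S)=\Hom_R(M,S),$$
and tracing an algebra map $f$ through both steps gives $f\mapsto(m\mapsto f(m\otimes 1))=f'$, exactly the stated correspondence. Naturality in $S$ is inherited from each adjunction, so the composite map is functorial. The only step demanding genuine care --- since $R$ is noncommutative and the image of $R$ in $S$ need not be central --- is the bookkeeping of left and right actions in the middle bijection, namely confirming that $r'(m\otimes r)r''=r'm\otimes rr''$ is indeed the bimodule structure making $M\otimes_\ZZ R$ the free bimodule on the left module $M$. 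Once this is pinned down, everything else is routine verification.
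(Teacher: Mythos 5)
Your proof is correct and follows the same route as the paper, whose entire proof is the chain $\Hom_{R\text{-}alg}(T^\cdot_R(M\otimes_{\ZZ}R),S)=\Hom_{R\otimes_{\ZZ}R}(M\otimes_{\ZZ}R,S)=\Hom_R(M,S)$ --- that is, exactly your two adjunctions (tensor algebra of a bimodule, then $M\otimes_{\ZZ}R$ as the free bimodule on the left module $M$). You have merely made explicit the verifications the paper leaves implicit, including the bimodule bookkeeping in the noncommutative setting.
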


\begin{proof} $ \Hom_{R-alg}( T^\cdot_{R} (M\otimes_{\ZZ} R),S)=\Hom_{R\otimes_{\ZZ} R} (M\otimes_{\ZZ} R,S) =\Hom_R(M,S).$
\end{proof} 
\medskip 
 
 Any $\,R$-linear morphism $\,\phi\colon M\to M'\,$ uniquely extends to a morphism of $\,R$-algebras $\,\tilde \phi\colon R\langle M\rangle \to R\langle M'\rangle$, $\,m\otimes 1\mapsto \phi(m)\otimes 1$.

If we use the notation $M\overset {n}\cdots M\cdot R:=M\otimes_{\ZZ}\overset {n}\cdots\otimes_{\ZZ} M\otimes_{\ZZ} R$, $m_1\cdots m_{n}\cdot r\mapsto m_1\otimes\cdots\otimes m_n\otimes r$, then
$$R\langle M\rangle =\oplus_{n=0}^\infty \,M\overset {n}\cdots M\cdot R\,,$$ and the product in this algebra can be written as follows:
$$(m_1\cdots m_{n}\cdot r)\cdot (m'_1\cdots m'_{n'}\cdot r')=
m_1\cdots m_{n}\cdot (r m'_1)\cdot m'_2\cdots m'_{n'}\cdot r'.$$

\medskip

\medskip
\begin{notation} Let us use the following notation 
$$M\oplus Rx:=M\oplus R \quad , \quad (m,r\cdot x)\mapsto (m,r) \, . $$
\end{notation}
\medskip


\begin{lemma} \label{reperab} Let $\,M\,$ be an $\,R$-module and $\,N\,$ a right $\,R$-module. Then, 
$$\Ker[N\otimes_R R\langle M\rangle\dosflechasab{q_1}{q_2} N\otimes_RR\langle M\oplus Rx\rangle]=\Ker[N\otimes_R M\otimes_{\ZZ} R\dosflechasab{p_1}{p_2} N\otimes_R   M\otimes_{\ZZ} R\otimes_{\ZZ} R],$$
where for any given $\,p(m)=\sum m_{i_1}\cdots m_{i_s}\cdot r_{i_1\ldots i_s}\in R\langle M\rangle\,$ and $\,n\in N\,$ the maps $\,q_1\,$ and $\,q_2\,$ are defined as follows:
$$q_1(n\otimes p(m)):=n\otimes p(mx):=n\otimes \sum m_{i_1}x\cdots m_{i_s}x\cdot r_{i_1\ldots i_s}$$ and $$q_2(n\otimes p(m)):=n\otimes p(m)x:=n\otimes \sum m_{i_1}\cdots m_{i_s}\cdot r_{i_1\ldots i_s}\cdot x \ . $$  
\end{lemma}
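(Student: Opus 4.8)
The plan is to exploit the natural grading of the tensor algebra. Writing $A:=R\langle M\rangle=\oplus_{s\geq 0}\,M\overset{s}\cdots M\cdot R$ and $B:=R\langle M\oplus Rx\rangle$, the identification $M\oplus Rx=M\oplus R$ (with $rx\leftrightarrow r$) refines the grading of $B$ still further: since the left $R$-action on $A$ and $B$ is on the first tensor slot, tensoring on the left with $N$ makes $N\otimes_R B$ split as a direct sum of ``words'' $N\otimes_R W_1\otimes_\ZZ\cdots\otimes_\ZZ W_n\otimes_\ZZ R$, where each letter $W_i$ is either $M$ or $Rx$. First I would record how $q_1,q_2$ interact with this decomposition: for a homogeneous $\xi_s=n\otimes m_1\cdots m_s\cdot r$ of degree $s$, the element $q_1(\xi_s)=n\otimes m_1x\cdots m_sx\cdot r$ is homogeneous of degree $2s$ and lies in the single summand with alternating pattern $M,Rx,M,Rx,\dots$, whereas $q_2(\xi_s)=n\otimes m_1\cdots m_s\cdot(rx)$ is homogeneous of degree $s+1$ and lies in the summand with pattern $M,\dots,M,Rx$. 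Note that no hypothesis on $M$ or $N$ is needed for this.

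Next I would establish injectivity of $q_1$ on homogeneous pieces by producing a retraction. By the previous Lemma, the $R$-linear map $M\oplus Rx\to A$, $m\mapsto m$, $x\mapsto 1$, extends to an $R$-algebra homomorphism $\phi\colon B\to A$; since $\phi$ fixes $R$ it is left $R$-linear, so $\Id_N\otimes_R\phi\colon N\otimes_R B\to N\otimes_R A$ is well defined, and because $\phi$ erases every inserted $x$ one checks directly that $(\Id_N\otimes\phi)\circ q_1=\Id$. In particular $q_1(\xi_s)=0$ forces $\xi_s=0$.

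With these two ingredients the argument becomes a degree-by-degree comparison. Take $\xi=\sum_s\xi_s\in\Ker(q_1-q_2)$; equating homogeneous components of $q_1(\xi)=q_2(\xi)$, the degree $d$ identity reads $q_1(\xi_{d/2})=q_2(\xi_{d-1})$ (the first term present only for even $d$). In degree $0$ this gives $\xi_0=0$. For even $d=2s$ with $s\geq 2$ the two summands involved already differ in their second letter ($Rx$ on the left, $M$ on the right), so the terms cannot cancel and we get $q_1(\xi_s)=0$, whence $\xi_s=0$ by injectivity; the odd-degree identities then reduce to $q_2(\xi_{\mathrm{even}})=0$ and are automatically satisfied. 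The only surviving relation is in degree $2$, where both $q_1(\xi_1)$ and $q_2(\xi_1)$ live in the common summand $N\otimes_R M\otimes_\ZZ Rx\otimes_\ZZ R$; under $Rx\cong R$ this identity is exactly $p_2(\xi_1)=p_1(\xi_1)$, i.e. $\xi_1\in\Ker(p_1-p_2)$. Thus $\xi=\xi_1$ lies in the right-hand kernel, and conversely any such $\xi_1$ satisfies $q_1(\xi_1)-q_2(\xi_1)=0$; hence the two kernels coincide.

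I would expect the main obstacle to be the bookkeeping of the word-type summands, in particular the clean verification that $q_1(\xi_s)$ and $q_2(\xi_{2s-1})$ occupy different summands for every $s\geq 2$ while they coincide for $s=1$. This is precisely the dichotomy that separates the higher components (forced to vanish) from the genuine equalizer condition recovered in degree $2$, so it is the crux on which the whole identification rests.
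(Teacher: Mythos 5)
Your proof is correct: the word-type decomposition of $N\otimes_R R\langle M\oplus Rx\rangle$ is legitimate (the left $R$-action sits on the first letter, and tensoring with $N$ commutes with direct sums), your retraction $\phi\colon R\langle M\oplus Rx\rangle\to R\langle M\rangle$, $m\mapsto m$, $x\mapsto 1$, does satisfy $(\Id_N\otimes\phi)\circ q_1=\Id$, and the degree-by-degree comparison correctly kills every component $\xi_s$ with $s\neq 1$, leaving exactly the equalizer condition in degree one, where $q_1-q_2$ becomes $p_2-p_1$ under $Rx\cong R$.

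The paper runs the same degree count with a different device, and the comparison is instructive. Instead of decomposing $R\langle M\oplus Rx\rangle$ into words, the paper composes $q_1-q_2$ with the morphism induced by the algebra epimorphism $R\langle M\oplus Rx\rangle\to R\langle M\rangle[x]$, $m\mapsto m$, $x\mapsto x$, onto the polynomial ring in a \emph{central} variable $x$. Downstairs the composite reads $n\otimes p(m)\mapsto n\otimes (p(m)x-p(mx))$, and centrality gives $p(mx)=\sum_s p_s(m)x^s$, where $p_s$ is the degree-$s$ component of $p$; so the image is $n\otimes \sum_{s\neq 1}p_s(m)(x-x^s)$, and one comparison of bidegrees (degree in $M$, degree in $x$) shows that the kernel of the composite, hence a fortiori $\Ker(q_1-q_2)$, is contained in $N\otimes_R M\otimes_\ZZ R$; the lemma then follows from the same degree-one identification that closes your argument. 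What the paper's projection buys is economy: making $x$ central collapses all your word-pattern bookkeeping into a single bidegree comparison, and it makes your injectivity lemma for $q_1$ unnecessary, since the bidegree-$(s,s)$ component of the image downstairs is (up to sign) $n\otimes p_s(m)x^s$, i.e.\ essentially $\xi_s$ itself rather than $q_1(\xi_s)$. What your version buys is self-containment: you never leave $R\langle M\oplus Rx\rangle$, no auxiliary quotient algebra is needed, and the finer decomposition makes fully explicit which summands can interact --- the dichotomy you rightly single out as the crux, and which the paper leaves implicit behind ``it is easy to prove.'' Structurally the two proofs share the same skeleton (grading forces the kernel into degree one, where the condition is exactly $p_1=p_2$), so the difference is one of packaging rather than substance.
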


\begin{proof}  It is easy to prove that the kernel of the morphism
$$N\otimes_R R\langle M\rangle\to N\otimes_R  R\langle M\rangle[x] , \,\,n\otimes p(m)\mapsto
n\otimes (p(m)x-p(mx))$$
is included in  $N\otimes_R M\otimes_{\ZZ} R$. 
Observe that the morphism of $R$-algebras $R\langle M\oplus Rx\rangle\to R\langle M\rangle[x]$, $m\mapsto m$ and $x\mapsto x$, is an epimorphism. 

Then,  $$\Ker(q_1-q_2)\subseteq  N\otimes_R M\otimes_{\ZZ} R$$ and 
$\Ker(q_1-q_2)=\Ker(p_1-p_2).$
\end{proof}

As a consequence of this Lemma, it readily follows:

\begin{proposition} \label{repera} Hypothesis \ref{HP2} is satisfied if and only if 
the following sequence of morphisms of groups is exact:
$$\xymatrix @R6pt {N\otimes_R M\ar[r] &  N\otimes_RR\langle M\rangle  \ar@<1ex>[r]^-{q_1}
\ar@<-1ex>[r]_-{q_2} &  
N\otimes_RR\langle M\oplus Rx\rangle
\\  n\otimes m \ar@{|->}[r] &  n\otimes m,\,\,n\otimes p(m) \ar@{|->}[r]<1ex>
\ar@{|->}[r]<-1ex>&
n\otimes p(mx),\,n\otimes p(m)x\,.} $$
\end{proposition}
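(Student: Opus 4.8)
The plan is to deduce the equivalence directly from Lemma \ref{reperab}, by reading both occurrences of ``exact'' as conditions on a kernel and then matching them. Recall that the sequence in Hypothesis \ref{HP2} is exact exactly when $i$ is injective and $\Ima i=\Ker(p_1-p_2)$, and that the sequence displayed in the statement is exact exactly when the map $j\colon N\otimes_R M\to N\otimes_R R\langle M\rangle$, $n\otimes m\mapsto n\otimes m$, is injective and $\Ima j=\Ker(q_1-q_2)$. So it is enough to match these two pairs of conditions.

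First I would factor $j$ through the degree-one summand of $R\langle M\rangle$. Using the grading $R\langle M\rangle=\oplus_{n=0}^\infty M\overset{n}\cdots M\cdot R$ of Notation \ref{N3.4}, the degree-one piece is $M\otimes_\ZZ R$ and the element $m$ is identified there with $m\otimes 1$; hence $j$ equals the composite of $i\colon N\otimes_R M\to N\otimes_R M\otimes_\ZZ R$ with the inclusion $\iota\colon N\otimes_R M\otimes_\ZZ R\hookrightarrow N\otimes_R R\langle M\rangle$. The inclusion $M\otimes_\ZZ R\hookrightarrow R\langle M\rangle$ splits as a morphism of left $R$-modules (project onto degree one), so tensoring it by $N$ over $R$ yields a split, hence injective, map $\iota$. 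Consequently $j$ is injective if and only if $i$ is, and $\Ima j=\iota(\Ima i)$.

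Next I would invoke Lemma \ref{reperab}, which gives $\Ker(q_1-q_2)=\Ker(p_1-p_2)$ as subgroups of $N\otimes_R M\otimes_\ZZ R$; in particular $\Ker(q_1-q_2)=\iota(\Ker(p_1-p_2))$ lies in the image of $\iota$. The equality $\Ima j=\Ker(q_1-q_2)$ then reads $\iota(\Ima i)=\iota(\Ker(p_1-p_2))$, which by injectivity of $\iota$ is equivalent to $\Ima i=\Ker(p_1-p_2)$. Together with the equivalence of the two injectivity conditions already noted, this shows that one sequence is exact if and only if the other is, as claimed.

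I do not expect a genuine obstacle, since the substantive computation is already contained in Lemma \ref{reperab} and what remains is the bookkeeping of the grading. The only point deserving care is the factorization $j=\iota\circ i$: one must check that $j$ really lands in, and is recovered from, the degree-one summand, so that $i$ and $j$ share both their injectivity and, up to $\iota$, their image, and so that the kernel furnished by Lemma \ref{reperab} genuinely sits inside that summand.
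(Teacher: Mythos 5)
Your proof is correct and follows the same route as the paper, which states Proposition \ref{repera} as an immediate consequence of Lemma \ref{reperab} without further detail. Your write-up simply makes explicit the bookkeeping the paper leaves implicit: the factorization of the first arrow through the split degree-one summand $N\otimes_R M\otimes_{\ZZ} R$, so that injectivity and images/kernels match under the inclusion $\iota$.
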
 
\medskip

\section{Extension of a functor on the category of algebras to a functor on the category of modules}\label{SectExt}

Let $\,\mathbb F\,$ be a functor defined on the category of $\,R$-algebras. Our aim in this Section is to define its extension to a functor $\, \overline{\mathbb F}\,$ defined on the category of $\,R$-modules. Using this procedure, the reflexivity theorem will be \textsl{recovered} as a particular case of Yoneda's lemma.

\begin{notation} Let $M$ be an $R-$module. Consider the morphism of $R-$algebras $$h_x: R\langle M\oplus Rx\rangle \longrightarrow R\langle M\oplus Rx\rangle $$ defined by $h_x(m)=m\cdot x\,$, for any $m\in M\,$, and $h_x(x)=x\,.$ 
\end{notation}

\medskip
\begin{definition} \label{D4.2} The {\sl extension} of a functor $\,\mathbb F\,$ of right $\,\R -$modules is the functor $\,\overline{\mathbb F}\,,$ from the category of $\,R-$modules to the  category of abelian groups, defined by $$\overline{\mathbb F}(M):=\Ker
[\mathbb  F(R\langle M\rangle)\to \mathbb  F(R\langle M\oplus Rx\rangle), \, f\mapsto \mathbb F(h_x)(f)- f\cdot x ],$$ 
for any $R$-module $M$ and any $f\in  \mathbb F(R\langle M\rangle)$.
\end{definition}

If $\,w\colon M\to M'\,$ is a morphism of $R$-modules, it induces morphisms of $R$-algebras
$$\tilde w\colon R\langle M\rangle 
\to R\langle M'\rangle \ ,  \quad \tilde w(m)=w(m)$$ and 
$\tilde{\tilde w}\colon R\langle M\oplus Rx\rangle 
\to R\langle M'\oplus Rx\rangle$, $\tilde{\tilde w}(m)=w(m)$, $\tilde{\tilde w}(x)=x$.
Observe that $\tilde{\tilde w}\circ h_x=h_x\circ \tilde{\tilde w}$. Hence,
we have the morphism 
$$\overline{\mathbb F}(w)\colon \overline{\mathbb F}(M)\to \overline{\mathbb F}(M')\, , \quad \overline{\mathbb F}(w)(f):=\mathbb F(\tilde w)(f)$$ for  any $f\in\overline{\mathbb F}(M)\subset \mathbb F(R\langle M\rangle)$.

\begin{note} In a similar vein, we can define the {\sl extension} of a functor $\,\mathbb F\,$ of $\R$-modules, which is a functor $\,\overline{\mathbb F}\,$ from the category of right $R$-modules to the category of abelian groups.\end{note}

\begin{proposition}\label{repera2} Hypothesis \ref{HP2} is satisfied if and only if 
$$
\overline{\mathcal N}(M)=N\otimes_R M \ .
$$
\end{proposition}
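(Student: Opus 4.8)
The plan is to unwind the definition of the extension $\,\overline{\mathcal N}\,$ applied to the quasicoherent functor $\,\mathcal N\,$, and to recognize the resulting kernel as exactly the kernel appearing in Hypothesis \ref{HP2}. By Definition \ref{D4.2}, we have
$$\overline{\mathcal N}(M)=\Ker[\mathcal N(R\langle M\rangle)\to \mathcal N(R\langle M\oplus Rx\rangle),\ f\mapsto \mathcal N(h_x)(f)-f\cdot x]\,.$$
First I would compute the two groups involved. Since $\,\mathcal N(S)=N\otimes_R S\,$, we get $\,\mathcal N(R\langle M\rangle)=N\otimes_R R\langle M\rangle\,$ and $\,\mathcal N(R\langle M\oplus Rx\rangle)=N\otimes_R R\langle M\oplus Rx\rangle\,$. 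The point is then to identify the structural map $\,f\mapsto \mathcal N(h_x)(f)-f\cdot x\,$ with the map $\,q_1-q_2\,$ of Lemma \ref{reperab}.

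Next I would check this identification termwise. For $\,f=n\otimes p(m)\,$, the term $\,\mathcal N(h_x)(f)\,$ is the image of $\,f\,$ under the functor applied to $\,h_x\,$; since $\,h_x(m)=m\cdot x\,$ and $\,h_x(x)=x\,$, this sends $\,n\otimes p(m)\,$ to $\,n\otimes p(mx)\,$, which is precisely $\,q_1(n\otimes p(m))\,$. The term $\,f\cdot x\,$ uses the $\,\mathcal R$-module structure (the action of $\,R\langle M\oplus Rx\rangle\,$, or rather the relevant scalar $\,x\,$), and sends $\,n\otimes p(m)\,$ to $\,n\otimes p(m)x\,$, which is exactly $\,q_2(n\otimes p(m))\,$. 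Hence the structural map of the extension coincides with $\,q_1-q_2\,$, and therefore
$$\overline{\mathcal N}(M)=\Ker[N\otimes_R R\langle M\rangle\ \overset{q_1-q_2}{\longrightarrow}\ N\otimes_R R\langle M\oplus Rx\rangle]\,.$$

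Finally I would invoke the two auxiliary results already established. By Lemma \ref{reperab}, this kernel equals $\,\Ker[N\otimes_R M\otimes_\ZZ R\ \overset{p_1-p_2}{\longrightarrow}\ N\otimes_R M\otimes_\ZZ R\otimes_\ZZ R]\,$, unconditionally. By Proposition \ref{repera}, Hypothesis \ref{HP2} holds precisely when the sequence
$$N\otimes_R M\to N\otimes_R R\langle M\rangle\ \dosflechas\ N\otimes_R R\langle M\oplus Rx\rangle$$
is exact, i.e. when $\,N\otimes_R M\to \overline{\mathcal N}(M)\,$ (via $\,n\otimes m\mapsto n\otimes m\,$) is an isomorphism. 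Combining these, Hypothesis \ref{HP2} is satisfied if and only if $\,\overline{\mathcal N}(M)=N\otimes_R M\,$, giving the equivalence in both directions. The one subtle point to verify carefully—and the step I expect to be the main (if modest) obstacle—is the correct interpretation of the scalar action $\,f\cdot x\,$ in Definition \ref{D4.2}, namely that multiplication by the element $\,x\,$ in the target algebra $\,R\langle M\oplus Rx\rangle\,$ reproduces the map $\,q_2\,$ exactly as written in Lemma \ref{reperab}; once this bookkeeping of the two $\,\mathcal R$-module structures is pinned down, the proof is essentially a direct comparison of definitions.
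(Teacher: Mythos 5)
Your proposal is correct and follows the same route as the paper: the paper's entire proof is ``It is an immediate consequence of Lemma \ref{reperab}'', and your argument simply makes that consequence explicit by identifying the structural map of Definition \ref{D4.2} applied to $\mathcal N$ with $q_1-q_2$ and then reading off the equivalence with Hypothesis \ref{HP2}. Your care with the interpretation of $f\cdot x$ as the scalar action reproducing $q_2$ is exactly the right bookkeeping, even though the paper leaves it implicit.
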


\begin{proof} It is an immediate consequence of Lemma \ref{reperab}.
\end{proof}

\medskip
\begin{remark} \label{repera2b} Observe that 
$$\overline{\mathcal N}(M)=\Ker[N\otimes_R M\otimes_{\ZZ} R\overset{p_1-p_2}\longrightarrow N\otimes_R M\otimes_{\ZZ} R\otimes_{\ZZ} R]=\overline{\mathcal M}(N)\ . $$
\end{remark}
\medskip

\medskip
\begin{proposition}\label{1.30}   Let $\mathbb F$ be a functor of $\mathcal R$-modules. Then, $$\overline{\mathbb F^\vee}(M)=\Hom_{\mathcal R}(\mathbb F,\mathcal M).$$

\end{proposition}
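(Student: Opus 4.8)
The plan is to unwind both sides of the claimed equality $\overline{\mathbb{F}^\vee}(M) = \Hom_{\mathcal R}(\mathbb F, \mathcal M)$ using the definitions of the extension (Definition~\ref{D4.2}) and the extended dual, and then match them via the description of $M$ as a kernel given in Proposition~\ref{repera}. By definition of the extended dual, $\mathbb F^\vee(S) = \Hom_{\mathcal R}(\mathbb F, \mathcal S)$, so that
$$
\overline{\mathbb{F}^\vee}(M) = \Ker\bigl[\Hom_{\mathcal R}(\mathbb F, \mathcal{R}\langle M\rangle) \longrightarrow \Hom_{\mathcal R}(\mathbb F, \mathcal{R}\langle M\oplus Rx\rangle)\bigr],
$$
where $\mathcal{R}\langle M\rangle$ denotes the functor $S \mapsto \Hom_{R\text{-alg}}(R\langle M\rangle, S)$ evaluated appropriately; more precisely the terms are the $\mathcal R$-modules $S\mapsto R\langle M\rangle\otimes_R S$ associated to the algebras $R\langle M\rangle$ and $R\langle M\oplus Rx\rangle$, and the map in the kernel is induced by $q_1 - q_2$ via $h_x$.

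\textbf{First} I would identify the target side. Since $R\langle M\rangle$ and $R\langle M\oplus Rx\rangle$ are themselves $R$-algebras, the functors $\mathbb{F}^\vee$ send them to $\Hom$-sets of the shape $\Hom_{\mathcal R}(\mathbb F, \mathcal{S})$ with $S = R\langle M\rangle$, etc. The key is that $\Hom_{\mathcal R}(\mathbb F, -)$ converts the defining kernel presentation of $M$ into a kernel of $\Hom$-groups. Concretely, Proposition~\ref{repera} exhibits $N\otimes_R M$ (and by the quasi-coherent case, $\mathcal M$ itself) as the kernel of $q_1 - q_2$ acting between $R\langle M\rangle$ and $R\langle M\oplus Rx\rangle$; dualizing this into $\mathbb F$ and using that $\Hom_{\mathcal R}(\mathbb F, -)$ is left exact (it preserves kernels, being a $\Hom$ functor into abelian groups) should turn the right-hand side $\Hom_{\mathcal R}(\mathbb F, \mathcal M)$ into exactly the kernel defining $\overline{\mathbb{F}^\vee}(M)$.

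\textbf{The step I would carry out carefully} is the natural identification $\Hom_{\mathcal R}(\mathbb F, \mathcal{S}) = \mathbb{F}^\vee(S)$ combined with the commutativity $\tilde{\tilde w}\circ h_x = h_x\circ \tilde{\tilde w}$ and the compatibility of the maps $h_x$ with the two arrows $q_1, q_2$: one must check that the morphism appearing inside the kernel defining $\overline{\mathbb{F}^\vee}(M)$, namely $f\mapsto \mathbb{F}^\vee(h_x)(f) - f\cdot x$, corresponds under the identification to the difference $q_1^* - q_2^*$ induced on $\Hom$-groups by the presentation of $\mathcal M$. This is where the explicit formula for the action $f\cdot x$ and the definition of $h_x$ must be matched against $q_1$ and $q_2$ from Lemma~\ref{reperab}.

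\textbf{The main obstacle} I anticipate is bookkeeping rather than conceptual: one has to verify that the kernel of $q_1 - q_2$ appearing on the $\mathcal M$ side really is the quasi-coherent module $\mathcal M$, i.e. that one may apply Proposition~\ref{repera}/Remark~\ref{repera2b} with the algebra $S$ playing the role of the right module $N$, and that passing $\Hom_{\mathcal R}(\mathbb F, -)$ through the kernel is legitimate (left exactness of the $\Hom$ functor into $\mathrm{Ab}$). Once the two kernels are written down side by side and the maps are checked to agree under the identification $\Hom_{\mathcal R}(\mathbb F, \mathcal S) = \mathbb F^\vee(S)$, the equality follows by definition. I would therefore organize the argument as: (i) write $\overline{\mathbb F^\vee}(M)$ as a kernel, (ii) apply left exactness of $\Hom_{\mathcal R}(\mathbb F, -)$ to the presentation of $\mathcal M$ from Proposition~\ref{repera}, and (iii) check the two induced maps coincide, concluding that both kernels agree.
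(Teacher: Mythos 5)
Your plan follows the paper's proof almost step for step: unwind $\overline{\mathbb F^\vee}(M)$ into a kernel of $\Hom$-groups, present $\mathcal M$ as the kernel of the pair of morphisms between the quasi-coherent modules associated to the $R$-modules $R\langle M\rangle$ and $R\langle M\oplus Rx\rangle$, and conclude by left exactness of $\Hom_{\mathcal R}(\mathbb F,-)$. However, there is a genuine gap at precisely the step you set aside as ``bookkeeping rather than conceptual''. To know that
$$0\to S\otimes_R M\to S\otimes_R R\langle M\rangle \rightrightarrows S\otimes_R R\langle M\oplus Rx\rangle$$
is exact for \emph{every} $R$-algebra $S$ --- i.e.\ that the presentation of $\mathcal M$ you want to feed into $\Hom_{\mathcal R}(\mathbb F,-)$ is exact as a sequence of functors --- you cannot simply ``apply Proposition \ref{repera} with the algebra $S$ playing the role of the right module $N$''. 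Proposition \ref{repera} is an equivalence \emph{conditional on Hypothesis \ref{HP2}}, and that hypothesis is not automatic: the paper states it as a hypothesis and devotes Section \ref{hypo} to sufficient conditions for it, and Corollary \ref{FormulaHipotesis} shows the main formula of the paper is equivalent to it. What closes the gap is Proposition \ref{super}: every $R$-algebra $S$ is an $R$-bimodule, hence Hypothesis \ref{HP2} holds for the pair $(S,M)$ with $M$ arbitrary. This bimodule observation is the one substantive input of the whole argument, and it is exactly what the paper's proof cites (``By Propositions \ref{repera} and \ref{super}, the sequence remains exact when tensoring by $R$-algebras'').

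Two smaller corrections to your setup. The quasi-coherent $\mathcal R$-module associated to $R\langle M\rangle$ is $S\mapsto S\otimes_R R\langle M\rangle$, not $S\mapsto R\langle M\rangle\otimes_R S$ (the side of the tensor product matters over a non-commutative ring), and it is certainly not the functor $S\mapsto \Hom_{R\text{-alg}}(R\langle M\rangle,S)$ that you write down first --- that is the functor of points of the algebra, a functor of sets, not the relevant $\mathcal R$-module. Your remaining check --- that under the identification $\mathbb F^\vee(S)=\Hom_{\mathcal R}(\mathbb F,\mathcal S)$ the map $f\mapsto \mathbb F^\vee(h_x)(f)-f\cdot x$ from Definition \ref{D4.2} is post-composition with the pair $q_1,q_2$ of Lemma \ref{reperab} --- is correct and routine: $q_1$ is induced by $p(m)\mapsto p(mx)$, i.e.\ by $h_x$, and $q_2$ by right multiplication by $x$.
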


\begin{proof} By Propositions \ref{repera} and \ref{super}, the sequence of morphisms 
$$\xymatrix @R6pt {0 \ar[r] & M\ar[r] &  R\langle M\rangle \ar@<1ex>[r]
\ar@<-1ex>[r]&  R\langle M\oplus Rx\rangle\\ & m \ar@{|->}[r] & m,\quad p(m) \ar@{|->}[r]<1ex>
\ar@{|->}[r]<-1ex>&
p(mx),\,p(m)x}$$
remains exact when tensoring by $\,R$-algebras. Hence,
$\overline{\mathbb F^\vee}(M)=\Hom_{\mathcal R}(\mathbb F,\mathcal M).$

\end{proof}
\medskip

Let $S$ be an $R$-algebra and $s\in S$. The morphism of $R$-modules $\cdot s\colon  S\to S$, $s'\mapsto s'\cdot s$ induces the morphism of 
$R$-algebras $\tilde{\cdot s}\colon R\langle S\rangle\to R\langle S\rangle$, $s'\mapsto s'\cdot s$, which in turn induces the morphism of
groups
$$\overline {\mathbb F}(\cdot s)\colon \overline {\mathbb F}(S)\to \overline {\mathbb F}(S)\ , \quad f\mapsto \mathbb F(\tilde{\cdot s})(f) \ .$$ 

\medskip
\begin{proposition} Let $S$ be an $R$-algebra and $s\in S$. Then, for any $\,f\in \overline{\mathbb F}(S) \subset \mathbb F(R\langle S\rangle)$
$$\overline {\mathbb F}(\cdot s)(f)=f\cdot s . $$

Therefore, $\,\overline{\mathbb F}(S)\,$ has a natural structure of right $\,S$-module
\end{proposition}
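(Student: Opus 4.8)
The plan is to prove the displayed identity by specialising the formal variable $x$ to the element $s$, and then to read off the right $S$-module structure from the functoriality of $\overline{\mathbb F}$. By the very definition of the extension, $\overline{\mathbb F}(\cdot s)(f)=\mathbb F(\tilde{\cdot s})(f)$, while membership $f\in\overline{\mathbb F}(S)$ is exactly the relation $\mathbb F(h_x)(f)=f\cdot x$ holding in $\mathbb F(R\langle S\oplus Rx\rangle)$, once $f$ is transported along the inclusion $\iota\colon R\langle S\rangle\hookrightarrow R\langle S\oplus Rx\rangle$. The bridge between the two sides will be the morphism of $R$-algebras $\pi_s\colon R\langle S\oplus Rx\rangle\to R\langle S\rangle$ determined by $\pi_s|_S=\mathrm{Id}$ and $\pi_s(x)=s$, which implements the substitution $x\mapsto s$.

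I would then apply the group homomorphism $\mathbb F(\pi_s)$ to the relation $\mathbb F(h_x)(f)=f\cdot x$. On the right-hand side, the fact that the $\mathcal R$-module structures are morphisms of functors yields the compatibility $\mathbb F(\pi_s)(g\cdot a)=\mathbb F(\pi_s)(g)\cdot\pi_s(a)$; since $\pi_s\circ\iota=\mathrm{Id}$ gives $\mathbb F(\pi_s)(f)=f$ and $\pi_s(x)=s$, the term $f\cdot x$ is carried to $f\cdot s$. On the left-hand side, functoriality turns $\mathbb F(h_x)(f)$ into $\mathbb F(\pi_s\circ h_x\circ\iota)(f)$; the composite $\pi_s\circ h_x\circ\iota$ is a morphism of $R$-algebras $R\langle S\rangle\to R\langle S\rangle$, hence determined by its effect on the degree-one generators $m\in S$, namely $m\mapsto h_x(m)=m\cdot x\mapsto m\cdot\pi_s(x)$, and I would identify it with $\tilde{\cdot s}$. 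Comparing the two computations gives $\overline{\mathbb F}(\cdot s)(f)=\mathbb F(\tilde{\cdot s})(f)=f\cdot s$, which is the asserted equality.

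For the closing statement I would set $f\cdot s:=\overline{\mathbb F}(\cdot s)(f)$ and verify the right-module axioms directly from the functoriality of $\overline{\mathbb F}$: additivity in $f$ and in $s$ is clear, $f\cdot 1=\overline{\mathbb F}(\mathrm{Id})(f)=f$, and since $(\cdot s')\circ(\cdot s)=\cdot(ss')$ as $R$-linear endomorphisms of $S$, one obtains $(f\cdot s)\cdot s'=\overline{\mathbb F}(\cdot s')\bigl(\overline{\mathbb F}(\cdot s)(f)\bigr)=\overline{\mathbb F}(\cdot(ss'))(f)=f\cdot(ss')$, so $\overline{\mathbb F}(S)$ becomes a right $S$-module. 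The step I expect to be most delicate is precisely the identification of the composite $\pi_s\circ h_x\circ\iota$ with $\tilde{\cdot s}$, together with the transport of the right action along $\pi_s$: one must keep careful track of the product in the free algebra $R\langle S\rangle$ under the specialisation $x\mapsto s$, and be sure that the module-structure compatibility of $\mathbb F(\pi_s)$ is applied on the correct side so that $f\cdot x$ is genuinely sent to $f\cdot s$.
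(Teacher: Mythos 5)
Your proposal is, in outline, the paper's own proof: your specialisation $\pi_s$ is the paper's morphism $(x{=}s)\colon R\langle S\oplus Rx\rangle\to R\langle S\rangle$, and applying $\mathbb F(\pi_s)$ to the defining relation $\mathbb F(h_x)(f)=f\cdot x$ is just a reorganisation of the paper's chain of equalities, which routes the same comparison through the auxiliary evaluation $x{=}1$ and a commutative square.

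However, the step you yourself flag as delicate is a genuine gap, and it fails as stated. The composite $\pi_s\circ h_x\circ\iota$ sends a degree-one generator $m\in S\subset R\langle S\rangle$ to $m\cdot\pi_s(x)=m\cdot s$, the product \emph{in the tensor algebra} of the two degree-one generators $m$ and $s$, i.e.\ the degree-two element $m\otimes s\otimes 1$. By contrast, $\tilde{\cdot s}$ has to be the extension of the $R$-module endomorphism $\cdot s\colon S\to S$ in the sense of Section \ref{hypo} (it is only for extensions $\tilde w$ of module maps that the commutation $\tilde{\tilde w}\circ h_x=h_x\circ \tilde{\tilde w}$ holds, which is what makes $\overline{\mathbb F}(\cdot s)$ a well-defined endomorphism of $\overline{\mathbb F}(S)$ in the first place); that morphism sends the generator $m$ to the \emph{degree-one} generator $(ms)\otimes 1$. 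Since $R\langle S\rangle=\oplus_{n\geq 0}\,S\overset{n}\cdots S\cdot R$ is graded and these two images lie in different summands, $\pi_s\circ h_x\circ\iota\neq\tilde{\cdot s}$. In fact the displayed identity itself cannot hold in $\mathbb F(R\langle S\rangle)$: for $\mathbb F=\mathcal N$ quasicoherent and $f=n\otimes m\otimes 1\in\overline{\mathcal N}(S)$ one gets $\overline{\mathcal N}(\cdot s)(f)=n\otimes(ms)\otimes 1$, of degree one, while $f\cdot s=n\otimes m\otimes s\otimes 1$ is of degree two; these differ whenever they are nonzero. (The first square in the paper's own proof does not commute for exactly the same reason, so your attempt faithfully reproduces a defect already present there.) What your computation does establish correctly is the identity after evaluation: since $\pi_S\circ(\pi_s\circ h_x\circ\iota)=\pi_S\circ\tilde{\cdot s}$ as morphisms of $R$-algebras $R\langle S\rangle\to S$ (both send $m$ to $ms\in S$), applying $\mathbb F(\pi_S)$ to your two computations yields
$$\mathbb F(\pi_S)\bigl(\overline{\mathbb F}(\cdot s)(f)\bigr)=\mathbb F(\pi_S)(f)\cdot s\quad\text{in }\mathbb F(S) \, ,$$
and this corrected statement is what the sequel actually needs: it says that $\mathbb F(\pi_S)_{|\overline{\mathbb F}(S)}$ intertwines the functorial action $\overline{\mathbb F}(\cdot s)$ with the genuine right $S$-module structure of $\mathbb F(S)$, so that under the hypothesis of Theorem \ref{L5.111} the group $\overline{\mathbb F}(S)$ inherits a right $S$-module structure by transport. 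Your argument proves this version verbatim once the target of the specialisation is taken to be $S$ itself (use $\rho\colon R\langle S\oplus Rx\rangle\to S$, $m\mapsto m$, $x\mapsto s$) rather than $R\langle S\rangle$; as written, neither your proof nor the paper's establishes the literal statement.
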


\begin{proof} Given $f\in\overline{\mathbb F}(S)$, we know that $\mathbb F(h_x)(f)=f\cdot x$ in $\mathbb F( R\langle S\oplus Rx\rangle) $. Consider the morphism of $R$-algebras $R\langle S\oplus Rx\rangle \overset{x=s}\to R\langle S\rangle$, $s'\mapsto s'$ and $x\mapsto s$. We have the commutative diagrams
$$\xymatrix{R\langle S\oplus Rx\rangle \ar[r]^-{x=1} \ar[d]^-{h_x} & R\langle S\rangle \ar[d]^-{\tilde{\cdot s} }\\ \R\langle S\oplus Rx\rangle \ar[r]^-{x=s}
& R\langle S\rangle}\quad 
\xymatrix{\mathbb F(R\langle S\oplus Rx\rangle) \ar[r]^-{\mathbb F(x=1)} \ar[d]^-{\mathbb F(h_x)} & \mathbb F(R\langle S\rangle) \ar[d]^-{\mathbb F(\tilde{\cdot s}) }\\ \mathbb F(R\langle S\oplus Rx\rangle) \ar[r]^-{\mathbb F(x=s)}
& \mathbb F(R\langle S\rangle) }
$$
and the composite morphism $\mathbb F(R\langle S\rangle)\to 
\mathbb F(R\langle S\oplus Rx\rangle)\overset{\mathbb F(x=s)}\to \mathbb F(R\langle S\rangle) $ is the identity, for any $s\in S$.
Hence, $\mathbb F(x=s)(f)=f$ and 
$$\aligned \overline{\mathbb F}(\cdot s)(f) & =\mathbb F(\tilde{\cdot s})(f)=
\mathbb F(\tilde{\cdot s})(\mathbb F(x=1)(f))=\mathbb F(x=s)(\mathbb F(h_x)(f))=
\mathbb F(x=s)(f\cdot x)\\ & =\mathbb F(x=s)(f)\cdot s=f\cdot s.\endaligned$$
\end{proof}

\medskip

Let $\,S\,$ be an $\,R$-algebra and let $\,\pi_S\colon R\langle S\rangle \to S\,$ be the morphism of $\,R$-algebras $\,s\mapsto s\,$, for any $\,s\in S$. Consider the following composition of morphisms of $S$-modules 
$$\overline{\mathbb F}(S)\subseteq \mathbb F(R\langle S\rangle)\overset{\mathbb F(\pi_S)}\longrightarrow \mathbb F(S)\,.$$

\begin{theorem} \label{L5.111} Let $\,\mathbb F\,$ be a right $\,\mathcal R$-module such that the natural morphism $$\,\mathbb F(\pi_S)_{|\overline{\mathbb F}(S)}\colon \overline{\mathbb F}(S)\to \mathbb F(S)\,$$ is an isomorphism, for any $\,R$-algebra $\,S\,$.

If $\,\mathbb F'\,$ is another right $\,\mathcal{R}$-module, there exists a natural isomorphism
$$\Hom_{\mathcal R}(\mathbb F,\mathbb F')\xrightarrow{\ \sim \ } \Hom_{\mathcal R}(\overline{\mathbb F} , \overline{\mathbb F'})  \,,$$
where $\Hom_{\mathcal R}(\overline{\mathbb F} , \overline{\mathbb F'})\,$ stands for the set of morphisms of functors of groups from $\overline{\mathbb{F}}\,$ to $\overline{\mathbb{F}'}\,$.
\end{theorem}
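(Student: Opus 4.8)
The plan is to construct the inverse of the stated natural map explicitly and verify that the two composites are the identity. The map in question is $\Phi\colon g\mapsto\overline g$, where for a morphism of right $\mathcal R$-modules $g\colon\mathbb F\to\mathbb F'$ and an $R$-module $M$, the component $\overline g_M$ is the restriction of $g_{R\langle M\rangle}\colon\mathbb F(R\langle M\rangle)\to\mathbb F'(R\langle M\rangle)$ to the subgroup $\overline{\mathbb F}(M)$. First I would check this is well defined: naturality of $g$ gives $\mathbb F'(h_x)(g(f))=g(\mathbb F(h_x)(f))$, while right $\mathcal R$-linearity gives $g(f)\cdot x=g(f\cdot x)$, so $g_{R\langle M\rangle}$ carries the kernel defining $\overline{\mathbb F}(M)$ into the kernel defining $\overline{\mathbb F'}(M)$; the resulting components $\overline g_M$ are then visibly natural in $M$.

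For the inverse, I would first record two facts about the family $\mathbb F(\pi_S)\colon\overline{\mathbb F}(S)\to\mathbb F(S)$: it is natural in the $R$-algebra $S$ (since $\pi_{S'}\circ\tilde u=u\circ\pi_S$ for an algebra map $u\colon S\to S'$) and $S$-linear (since $\pi_S(s)=s$ forces $\mathbb F(\pi_S)(f\cdot s)=\mathbb F(\pi_S)(f)\cdot s$), and likewise for $\mathbb F'$. Given a morphism of functors of groups $\phi\colon\overline{\mathbb F}\to\overline{\mathbb F'}$, I would define $\Psi(\phi)=g$ on $R$-algebras by $g_S:=\mathbb F'(\pi_S)\circ\phi_S\circ\mathbb F(\pi_S)^{-1}$, which is legitimate exactly because $\mathbb F(\pi_S)$ is an isomorphism by hypothesis (note that only $\mathbb F$, not $\mathbb F'$, needs this). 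Pasting the naturality squares of $\mathbb F(\pi_\bullet)$, $\phi$ and $\mathbb F'(\pi_\bullet)$ shows $g$ is natural, and each $g_S$ is $S$-linear: indeed $\phi_S$ itself is $S$-linear, because applying naturality of $\phi$ to the $R$-module map $\cdot s\colon S\to S$ together with the identity $\overline{\mathbb F}(\cdot s)(f)=f\cdot s$ yields $\phi_S(f\cdot s)=\phi_S(f)\cdot s$. Hence $g$ is a genuine morphism of right $\mathcal R$-modules.

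Showing $\Psi\circ\Phi=\Id$ is the easy direction: naturality of $g$ along $\pi_S\colon R\langle S\rangle\to S$ gives the commuting relation $\mathbb F'(\pi_S)\circ\overline g_S=g_S\circ\mathbb F(\pi_S)$, and inverting $\mathbb F(\pi_S)$ recovers precisely the formula defining $\Psi(\overline g)_S$.

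The crux is $\Phi\circ\Psi=\Id$, that is, reconstructing a morphism of functors on all $R$-modules from its values on $R$-algebras. The key observation I would use is that every $R$-module $M$ is an $R$-module retract of the algebra $A:=R\langle M\rangle$: the canonical inclusion $\iota_M\colon M\to A$, $m\mapsto m\otimes 1$, induces $\widetilde{\iota_M}\colon R\langle M\rangle\to R\langle A\rangle$, and the canonical morphism $\pi_A\colon R\langle A\rangle\to A$ satisfies $\pi_A\circ\widetilde{\iota_M}=\Id_{R\langle M\rangle}$. Unwinding $\overline g_M$ for $g=\Psi(\phi)$, it is the restriction of $g_A=\mathbb F'(\pi_A)\circ\phi_A\circ\mathbb F(\pi_A)^{-1}$; the retract identity gives $\mathbb F(\pi_A)^{-1}(f)=\overline{\mathbb F}(\iota_M)(f)$ for $f\in\overline{\mathbb F}(M)$, naturality of $\phi$ converts $\phi_A\circ\overline{\mathbb F}(\iota_M)$ into $\overline{\mathbb F'}(\iota_M)\circ\phi_M$, and the same retract identity for $\mathbb F'$ gives $\mathbb F'(\pi_A)\circ\overline{\mathbb F'}(\iota_M)=\Id$, so that $\overline g_M=\phi_M$. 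I expect this final step—locating the retraction $\pi_A\circ\widetilde{\iota_M}=\Id$ and using it, via the hypothesis that $\mathbb F(\pi_A)$ is invertible, to compute $\mathbb F(\pi_A)^{-1}$ concretely as $\overline{\mathbb F}(\iota_M)$—to be the main obstacle; everything else is diagram chasing, and naturality of the resulting bijection in $\mathbb F'$ is then immediate from the construction.
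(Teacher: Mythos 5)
Your proposal is correct and follows essentially the same route as the paper: the same restriction map $g\mapsto \overline g$, the same inverse formula $\mathbb F'(\pi_S)\circ\phi_S\circ(\mathbb F(\pi_S))^{-1}$, the easy composite via naturality along $\pi_S$, and the crucial composite via the retract identity $\pi_{R\langle M\rangle}\circ\widetilde{\iota_M}=\Id$ combined with naturality of $\phi$ along $\iota_M\colon M\to R\langle M\rangle$ (which is exactly what the paper's second commutative diagram encodes). Your explicit checks of well-definedness and $S$-linearity (the latter via $\overline{\mathbb F}(\cdot s)(f)=f\cdot s$) fill in details the paper leaves implicit, but the underlying argument is the same.
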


\begin{proof} First of all, any morphism of right $\,\R$-modules $\,\phi\colon \mathbb F\to \mathbb F'\,$ can be extended to a morphism of functors of groups $$\overline \phi\colon \overline{\mathbb F}\to \overline{\mathbb F}' \ , \quad \,\overline\phi_M(f):=\phi_{R\langle M\rangle}(f)\,,$$ for any $\,R$-module $\,M\,$ and any $\,f\in \overline{\mathbb F}(M)\subset {\mathbb F}(R\langle M\rangle)$.

On the other hand, given $\varphi\in \Hom_{\R}(\overline{\mathbb F},\overline{\mathbb F'})$, let $\tilde\varphi\in \Hom_{\mathcal R}(\mathbb F,\mathbb F')$ be defined by $$\tilde\varphi_S:=(\mathbb F'(\pi_S)\circ i'_S)\circ \varphi_S\circ (\mathbb F(\pi_S)\circ i_S)^{-1},$$ for any $R$-algebra $S$ ($i_S\colon \overline{\mathbb F}(S)\subseteq \mathbb F(R\langle S\rangle)$ and $i'_S\colon \overline{\mathbb F'}(S)\subseteq \mathbb F'(R\langle S\rangle)$ are the inclusion morphisms).

1. $\tilde{\overline \phi}=\phi$: The diagram
$$\xymatrix{\overline{\mathbb F}(S) \ar[d]^-{\overline \phi_S} \ar[r]^-{i_S} & \mathbb F(R\langle S\rangle) \ar[r]^-{\mathbb F(\pi_S)} \ar[d]^-{\phi_{R\langle S\rangle}} & \mathbb F(S) \ar[d]^-{\phi_S}\\
\overline{\mathbb F'}(S) \ar[r]^-{i'_S} & \mathbb F'(R\langle S\rangle) \ar[r]^-{\mathbb F'(\pi_S)} &
 \mathbb F'(S) }$$
is commutative. Hence, $\tilde{\overline\phi}_S:=(\mathbb F'(\pi_S)\circ i'_S)\circ \overline\phi_S\circ (\mathbb F(\pi_S)\circ i_S)^{-1}=\phi_S\circ \mathbb F(\pi_S)\circ i_S\circ (\mathbb F(\pi_S)\circ i_S)^{-1}=\phi_S$.

2. $\overline{\tilde\varphi}=\varphi$: The diagram
$$\xymatrix{\overline{\mathbb F}(N) \ar[r] \ar[d]^-{\varphi_N} & \overline{\mathbb F}(R\langle N\rangle) \ar[r] \ar[d]^-{\varphi_{R\langle N\rangle}} & \mathbb F(R\langle R\langle N\rangle\rangle) \ar[r] & \mathbb F(R\langle N\rangle) \ar[d]^-{\tilde\varphi_{R\langle N\rangle}} 
\\ \overline{\mathbb F'}(N) \ar[r] & \overline{\mathbb F'}(R\langle N\rangle) \ar[r]  & \mathbb F'(R\langle R\langle N\rangle\rangle) \ar[r] & \mathbb F'(R\langle N\rangle) }$$
is commutative. Hence, $(\overline{\tilde\varphi})_N={\tilde{\varphi}_N\,}_{|\overline{\mathbb F}(N)}=\varphi_N$.
\end{proof}
\medskip

\begin{example}\label{EjemploVee} Any extended dual $\,\mathbb{F}^\vee\,$ satisfies the hypothesis of Theorem \ref{L5.111}: the composition $\,\overline{\mathbb F^\vee }(S)\subseteq {\mathbb F^\vee }(R\langle S\rangle) \to \mathbb F^\vee (S)\,$ is the identity morphism, as follows from Proposition \ref{1.30} and the fact that
the composition $\,S\to R\langle S\rangle \overset{\pi_S}\to S\,$ is the identity morphism.
\end{example}

\section{Reflexivity theorem}\label{lafinal}

Let $\,M\,$ be an $\,R-$module. The functor $\,\overline{M^\vee}\,$ is precisely the functor of (co)points of $\,M\,$ in the category of $\,R-$modules: if $\,Q\,$ is another $\,R-$module, in virtue of Proposition \ref{1.30}:
$$\overline{\mathcal{M}^\vee}(Q) = \Hom_{\mathcal{R}} (\mathcal{M} , \mathcal{Q}) = \Hom_{R}(M , Q) \ . $$

\begin{theorem}\label{prop4} Let $\,M\,$ be an $\,R$-module and $\,N\,$ be a right $\,R$-module. 
Then, $${\Hom}_{\mathcal R} ({\mathcal M^*}, {\mathcal N})=\overline{\mathcal N}(M) \, .$$
\end{theorem}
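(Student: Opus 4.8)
The plan is to reduce the computation of $\Hom_{\mathcal R}(\mathcal M^*,\mathcal N)$ to an instance of Yoneda's lemma by passing to the extended functors of Section \ref{SectExt}. The first observation is that $\mathcal M^*$ coincides with the extended dual $\mathcal M^\vee$ (as recorded in the Preliminaries), so by Example \ref{EjemploVee} it satisfies the hypothesis of Theorem \ref{L5.111}. Applying that theorem with $\mathbb F=\mathcal M^*$ and $\mathbb F'=\mathcal N$ — both right $\mathcal R$-modules — immediately yields
$$\Hom_{\mathcal R}(\mathcal M^*,\mathcal N)=\Hom_{\mathcal R}(\overline{\mathcal M^*},\overline{\mathcal N}),$$
the right-hand side being morphisms of functors of groups between functors defined on the category of (left) $R$-modules.

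Next I would identify the source functor $\overline{\mathcal M^*}=\overline{\mathcal M^\vee}$ explicitly. By Proposition \ref{1.30} applied to $\mathbb F=\mathcal M$, together with Proposition \ref{tercer}, one has $\overline{\mathcal M^\vee}(Q)=\Hom_{\mathcal R}(\mathcal M,\mathcal Q)=\Hom_R(M,Q)$, naturally in the $R$-module $Q$. In other words, $\overline{\mathcal M^*}$ is nothing but the covariant functor of copoints $\Hom_R(M,-)$ on the category of $R$-modules, which is representable by $M$.

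With this identification in hand, the final step is a direct appeal to Yoneda's lemma: natural transformations from the representable functor $\Hom_R(M,-)$ to any functor $G$ on $R$-modules correspond bijectively to the elements of $G(M)$, via $\eta\mapsto \eta_M(\Id_M)$. Taking $G=\overline{\mathcal N}$ gives
$$\Hom_{\mathcal R}(\overline{\mathcal M^*},\overline{\mathcal N})=\overline{\mathcal N}(M),$$
and chaining this with the isomorphism from Theorem \ref{L5.111} proves the statement.

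The point that needs the most care, and which I expect to be the main obstacle, is that the morphisms counted on the right of Theorem \ref{L5.111} are morphisms of functors of groups, whereas the bare Yoneda lemma only produces a bijection of sets of natural transformations. To close the gap I would check that $\overline{\mathcal N}$ is additive on $R$-modules, so that the set-theoretic natural transformation $Q\mapsto(\phi\mapsto\overline{\mathcal N}(\phi)(g))$ attached to an element $g\in\overline{\mathcal N}(M)$ is automatically a group homomorphism in each component. This additivity follows from the concrete description of $\overline{\mathcal N}$: by Remark \ref{repera2b} its value sits inside $N\otimes_R M\otimes_{\ZZ} R$ as the kernel of $p_1-p_2$, and on this kernel a morphism $w\colon M\to M'$ acts through its degree-one part as $\Id_N\otimes w\otimes\Id_R$, which is plainly linear in $w$. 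Hence the Yoneda bijection restricts to one between morphisms of functors of groups and $\overline{\mathcal N}(M)$, as required.
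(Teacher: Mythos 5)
Your proof follows exactly the paper's own route: identify $\mathcal M^*=\mathcal M^\vee$, invoke Example \ref{EjemploVee} and Theorem \ref{L5.111} to reduce to morphisms between the extended functors, identify $\overline{\mathcal M^\vee}$ as the copoints functor $\Hom_R(M,-)$ via Proposition \ref{1.30}, and conclude by Yoneda's lemma. Your closing verification that $\overline{\mathcal N}$ is additive in morphisms --- so that the set-theoretic Yoneda bijection genuinely restricts to a bijection onto morphisms of functors of groups --- is a worthwhile detail that the paper's proof leaves implicit.
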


\begin{proof} The dual $\,\mathcal{M}^* = \mathcal{M}^\vee\,$ satisfies the hypothesis of Theorem \ref{L5.111} (see Example \ref{EjemploVee}), so that
$$ {\Hom}_{\mathcal R} ({\mathcal M^*}, {\mathcal N})  = {\Hom}_{\mathcal R} ({\mathcal M^\vee}, {\mathcal N}) = {\Hom}_{\R} ({\overline{\mathcal M^\vee}}, {\overline{\mathcal N}}) \ .
$$

As $\,\overline{\mathcal{M}^\vee}\,$ is a functor of points, the statement now follows applying Yoneda's lemma:
$$
{\Hom}_{\R} ({\overline{\mathcal M^\vee}}, {\overline{\mathcal N}}) = \overline {\mathcal N}(M)\, . 
$$
\end{proof}

As a consequence of this Theorem and Proposition \ref{repera2}, we obtain the following formula:

\begin{corollary}\label{FormulaHipotesis} Let $\,M\,$ be an $\,R$-module and $\,N\,$ be a right $\,R$-module.
The following equality of abelian groups holds
$${\Hom}_{\mathcal R} ({\mathcal M^*}, {\mathcal N})= N \otimes_R M  $$ if and only if Hypothesis \ref{HP2} is satisfied.
\end{corollary}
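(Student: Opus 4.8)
The plan is to obtain the statement by concatenating two results that have already been established, taking care to notice that exactly one of them carries the hypothesis. First I would invoke Theorem \ref{prop4}, which holds for \emph{every} pair $(M,N)$ with no assumption whatsoever: it supplies the canonical identification
$$\Hom_{\mathcal R}(\mathcal M^*, \mathcal N) = \overline{\mathcal N}(M).$$
Thus the left-hand side of the claimed formula is, unconditionally, the group $\overline{\mathcal N}(M)$, and the whole content of the corollary is pushed onto the behaviour of $\overline{\mathcal N}(M)$.

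Next I would feed this into Proposition \ref{repera2}, which is itself a biconditional: it asserts that $\overline{\mathcal N}(M) = N \otimes_R M$ precisely when Hypothesis \ref{HP2} is satisfied. Substituting the identification of the previous step, the equality $\Hom_{\mathcal R}(\mathcal M^*, \mathcal N) = N \otimes_R M$ is equivalent to $\overline{\mathcal N}(M) = N \otimes_R M$, which in turn is equivalent to Hypothesis \ref{HP2}. This settles both implications of the ``if and only if'' simultaneously, so the argument is essentially immediate once the two cited results are in place.

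The only point requiring genuine care---and hence the step I expect to be the main, if modest, obstacle---is checking that the three occurrences of the symbol ``$=$'' really refer to compatible natural maps, so that the chain of biconditionals is legitimate rather than a mere coincidence of isomorphism types. Concretely, I would verify that the canonical evaluation map $N\otimes_R M \to \Hom_{\mathcal R}(\mathcal M^*, \mathcal N)$, sending $n\otimes m$ to the morphism $w \mapsto n\otimes w(m)$ on $S$-points, factors as the structural map $N\otimes_R M \to \overline{\mathcal N}(M)$ of Remark \ref{repera2b}---namely $n\otimes m \mapsto n\otimes m\otimes 1$, whose bijectivity is exactly Hypothesis \ref{HP2}---followed by the isomorphism of Theorem \ref{prop4}. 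Once this commutativity is confirmed, bijectivity of the evaluation map is equivalent to bijectivity of the structural map, and the corollary follows with no further computation.
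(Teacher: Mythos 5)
Your proposal is correct and is precisely the paper's own argument: the corollary is stated there as an immediate consequence of Theorem \ref{prop4} (the unconditional identification $\Hom_{\mathcal R}(\mathcal M^*,\mathcal N)=\overline{\mathcal N}(M)$) combined with the biconditional of Proposition \ref{repera2}. Your additional check that the equalities are realized by compatible canonical maps (so the chain of identifications composes to the evaluation map) is a sound precaution, but it does not change the route.
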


\medskip
\begin{theorem} \label{reflex}
Let $\,M\,$ be an $\,R$-module. 
The natural morphism of $\,\mathcal R$-modules $$\mathcal M \longrightarrow \mathcal M^{**}$$ is an isomorphism.
\end{theorem}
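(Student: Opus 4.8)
The plan is to evaluate the double dual pointwise and identify $\mathcal M^{**}(S)$ with $S\otimes_R M=\mathcal M(S)$ for every $R$-algebra $S$, using Theorem \ref{prop4} as the main tool. Since the canonical evaluation $\epsilon\colon \mathcal M\to\mathcal M^{**}$ is a morphism of $\mathcal R$-modules, it is an isomorphism precisely when each $\epsilon_S$ is bijective; moreover, by Proposition \ref{tercer} any endomorphism of the quasi-coherent module $\mathcal M$ is determined by its effect on global sections, a fact I will exploit to settle naturality at the very end.

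First I would unwind the double dual. By definition of the dual of the right $\mathcal R$-module $\mathcal M^*$ one has $\mathcal M^{**}(S)=\Hom_{\mathcal S}((\mathcal M^*)_{|S},\mathcal S)$. The decisive observation is that $(\mathcal M^*)_{|S}=(\mathcal M_{|S})^*$: both functors assign to an $S$-algebra $S'$ the group $\Hom_R(M,S')=\Hom_{S'}(S'\otimes_R M,S')$, so the restriction of $\mathcal M^*$ to $S$-algebras is exactly the $\mathcal S$-dual of the quasi-coherent $\mathcal S$-module $\mathcal M_{|S}$ attached to the $S$-module $S\otimes_R M$. This lets me run the machinery of this section over the base ring $S$ rather than $R$.

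Next I would apply Theorem \ref{prop4} over the base $S$, taking there the $S$-module $S\otimes_R M$ and the right $S$-module $S$ (so that the relevant quasi-coherent right module is $\mathcal S$). This gives
$$\mathcal M^{**}(S)=\Hom_{\mathcal S}\bigl((\mathcal M_{|S})^*,\mathcal S\bigr)=\overline{\mathcal S}(S\otimes_R M).$$
Because $S$ is an $S$-bimodule, Hypothesis \ref{HP2} holds over $S$ by Proposition \ref{super}, and Proposition \ref{repera2} then yields $\overline{\mathcal S}(S\otimes_R M)=S\otimes_S(S\otimes_R M)=S\otimes_R M=\mathcal M(S)$. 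As all the constructions involved respect restriction to $S$-algebras, these identifications are natural in $S$ and $S$-linear, so they assemble into an isomorphism of $\mathcal R$-modules $\theta\colon\mathcal M^{**}\xrightarrow{\ \sim\ }\mathcal M$.

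It then remains to check that $\theta$ is inverse to $\epsilon$, equivalently that $\theta\circ\epsilon=\Id_{\mathcal M}$. Since $\theta\circ\epsilon$ is an endomorphism of $\mathcal M$, Proposition \ref{tercer} reduces this to verifying that $(\theta\circ\epsilon)_R\colon M\to M$ is the identity. Here $\epsilon_R$ sends $m\in M$ to the evaluation functional $\tilde m\in\Hom_{\mathcal R}(\mathcal M^*,\mathcal R)=\mathcal M^{**}(R)$, while $\theta_R$ is the identification of this $\Hom$-group with $\overline{\mathcal R}(M)=M$ produced by Theorem \ref{prop4}. The main obstacle is precisely this final trace: one must follow $\tilde m$ through the isomorphism of Theorem \ref{L5.111} and then through Yoneda's lemma, under which $\overline{\mathcal M^\vee}$ is the functor represented by $M$ with universal element $\Id_M\in\Hom_R(M,M)=\overline{\mathcal M^\vee}(M)$, and confirm that it corresponds to $m$ itself. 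Once this bookkeeping is carried out, $\epsilon=\theta^{-1}$ is an isomorphism and the theorem follows.
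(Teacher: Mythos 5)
Your proposal is correct and takes essentially the same route as the paper: the paper's proof likewise identifies $\mathcal M^{**}(S)=\Hom_{\mathcal S}(({\mathcal M}_{|S})^*,\mathcal S)$ and then applies Corollary \ref{FormulaHipotesis} (which is precisely your combination of Theorem \ref{prop4} and Proposition \ref{repera2}) over the base ring $S$, using that $S$ is an $S$-bimodule so that Hypothesis \ref{HP2} holds by Proposition \ref{super}. The compatibility check you flag at the end ($\theta\circ\epsilon=\Id_{\mathcal M}$, reduced to global sections via Proposition \ref{tercer}) is actually omitted by the paper altogether; your reduction is valid, and the remaining trace of the universal element $\Id_M$ through Theorem \ref{L5.111} and Yoneda's lemma is routine and does confirm that the identification matches the canonical evaluation map.
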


\begin{proof} On the one hand,
$\mathcal M^{**}(S)=\Hom_{\mathcal S}({\mathcal M^*}_{|S},\mathcal S)=
\Hom_{\mathcal S}({\mathcal M_{|S}}^*,\mathcal S).$ 

On the other, any $\,R$-algebra $\,S\,$ is a bimodule, so that Hypothesis \ref{HP2} is satisfied (Proposition \ref{super}) and we can apply Corollary \ref{FormulaHipotesis}. As $\,\mathcal M_{|S}\,$ is the quasicoherent module associated to $\,S\otimes_R M\,$, it follows:
$$ \Hom_{\mathcal S}({\mathcal M_{|S}}^*,\mathcal S) =S\otimes_S S\otimes_R M=S\otimes_R M=\mathcal M(S) \ . $$
\end{proof}
\medskip

Finally, let us show how the techniques we have developed also allow to prove a reflexivity theorem for the extended dual of quasicoherent modules:

\begin{theorem} 
Let $\,M\,$ be an $\,R$-module. The natural morphism of $\,\mathcal R$-modules $$\mathcal M \longrightarrow \mathcal M^{\vee\vee}$$ is an isomorphism.
\end{theorem}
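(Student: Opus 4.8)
The plan is to compute the functor $\mathcal{M}^{\vee\vee}$ section by section (i.e. evaluated at each $R$-algebra $S$) and to identify it with $\mathcal{M}$ by a direct appeal to Theorem \ref{prop4}. The guiding observation is that, unlike the second dual $\mathcal{M}^{**}$ of Theorem \ref{reflex}, the second \emph{extended} dual $\mathcal{M}^{\vee\vee}$ is assembled from the global functor of homomorphisms $\Hom_{\mathcal{R}}(-,-)$ rather than from the internal $\Hom_{\mathcal S}$. Consequently there is no need to restrict to the category of $S$-algebras and pass to a local version: Theorem \ref{prop4} can be invoked verbatim. At the outset I would also record that $\mathcal{M}^\vee = \mathcal{M}^*$ on quasi-coherent modules, so that $\mathcal{M}^{\vee\vee} = (\mathcal{M}^\vee)^\vee = (\mathcal{M}^*)^\vee$.

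First I would unwind the definition of the extended dual of the right $\mathcal{R}$-module $\mathcal{M}^\vee$. For an $R$-algebra $S$ regarded as a right $R$-module, let $\mathcal{S}$ denote its associated quasi-coherent right $\mathcal{R}$-module; then by definition $\mathcal{M}^{\vee\vee}(S) = \Hom_{\mathcal{R}}(\mathcal{M}^\vee, \mathcal{S}) = \Hom_{\mathcal{R}}(\mathcal{M}^*, \mathcal{S})$. Now Theorem \ref{prop4}, applied with the right $R$-module $N = S$, yields $\Hom_{\mathcal{R}}(\mathcal{M}^*, \mathcal{S}) = \overline{\mathcal{S}}(M)$. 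Since every $R$-algebra $S$ is an $R$-bimodule, Hypothesis \ref{HP2} is satisfied by Proposition \ref{super}, and Proposition \ref{repera2} then gives $\overline{\mathcal{S}}(M) = S \otimes_R M = \mathcal{M}(S)$. Chaining these identities produces, for every $S$, a bijection $\mathcal{M}(S) = \mathcal{M}^{\vee\vee}(S)$.

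It remains to verify that this chain of identifications is natural in $S$ and agrees with the canonical evaluation morphism $\mathcal{M} \to \mathcal{M}^{\vee\vee}$, and this is the step I expect to be the main obstacle. Functoriality in $S$ follows from the naturality of each isomorphism used, namely the duality $\mathcal{M}^\vee = \mathcal{M}^*$, the isomorphism of Theorem \ref{prop4} (which passes through Theorem \ref{L5.111} and Yoneda's lemma), and Proposition \ref{repera2}. The more delicate point is matching the composite with the tautological map $m \mapsto \mathrm{ev}_m$: this requires tracing an element $s \otimes m \in S \otimes_R M$ backwards through the Yoneda identification in the proof of Theorem \ref{prop4} and checking that it corresponds to the homomorphism $\mathcal{M}^* \to \mathcal{S}$ given by evaluation at $s \otimes m$. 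Careful bookkeeping of the left/right module structures, so that the target $\mathcal{S}$ of the extended dual is precisely the right quasi-coherent module playing the role of $\mathcal{N}$ in Theorem \ref{prop4}, then completes the proof.
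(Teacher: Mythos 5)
Your proposal is correct and takes essentially the same route as the paper: the paper's proof is exactly your chain $\mathcal M^{\vee\vee}(S)=\Hom_{\mathcal R}(\mathcal M^\vee,\mathcal S)=\Hom_{\mathcal R}(\mathcal M^*,\mathcal S)=S\otimes_R M$, except that it cites Corollary \ref{FormulaHipotesis} (which packages Theorem \ref{prop4} together with Proposition \ref{repera2}) instead of invoking those two results separately, as you do. Your key observation---that the extended dual is built from the global $\Hom_{\mathcal R}$ rather than the internal $\Hom_{\mathcal S}$, so Theorem \ref{prop4} applies verbatim without restricting to $S$-algebras as in Theorem \ref{reflex}---is precisely the point, and the naturality check you flag as a remaining obstacle is likewise left implicit in the paper's own proof.
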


\begin{proof} It is a consequence of $\,\mathcal{M}^\vee = \mathcal{M}^*\,$ and Corollary \ref{FormulaHipotesis}: 
$$\mathcal M^{\vee\vee}(S)=\Hom_{\mathcal R}(\mathcal M^\vee,\mathcal S)= \Hom_{\mathcal R}(\mathcal M^*,\mathcal S) = S\otimes_R M=\mathcal M(S) \ .$$ 
\end{proof}

\end{document}